\providecommand{\U}[1]{\protect\rule{.1in}{.1in}}
\newtheorem{theorem}{Theorem}
\newtheorem{example}[theorem]{Example}
\newtheorem{lemma}[theorem]{Lemma}
\newtheorem{proposition}[theorem]{Proposition}
{\theorembodyfont{\rmfamily}
\newtheorem{remark}[theorem]{Remark}
}
\newenvironment{proof}[1][Proof]{\noindent\textbf{#1} }{\ \rule{0.5em}{0.5em}}
\newcommand*\re{\mathbb{R}}
\newcommand*\Omegabar{\overline{\Omega}}
\newcommand*\delomega{\partial\Omega}
\newcommand*\intdelomega{\int_{\partial\Omega}}
\newcommand*\dH{d\mathcal{H}^{n-1}}
\newcommand*\HH{\mathcal{H}}
\newcommand*\LL{\mathcal{L}}
\newcommand*\nablas{\nabla_{S^{n-1}}}
\newcommand*\intS{\int_{S^{n-1}}}
\date{}
\begin{document}

\title{On the isoperimetric problem with perimeter density $r^p.$}

\maketitle

\centerline{\scshape Gyula Csat\'{o} }
\medskip
{\footnotesize
\centerline{Facultad de Ciencias F\'isicas y Matem\'aticas, Universidad de Concepci\'on, Concepci\'on, Chile.}
   \centerline{gycsato@udec.cl}

}

\smallskip

\begin{abstract}
In this paper the author studies the isoperimetric problem in $\re^n$ with perimeter density $|x|^p$ and volume density $1.$  
We settle completely the case $n=2,$ completing a previous work by the author: we characterize the case of equality if $0\leq p\leq 1$ and deal with the case $-\infty<p<-1$ (with the additional assumption $0\in\Omega$). In the case $n\geq 3$ we deal mainly with the case $-\infty<p<0,$ showing among others that the results in $2$ dimensions do not generalize for the range $-n+1<p<0.$ 
\end{abstract}

\let\thefootnote\relax\footnotetext{\textit{2010 Mathematics Subject Classification.} Primary 49Q10, 49Q20, Secondary 26D10.}
\let\thefootnote\relax\footnotetext{\textit{Key words and phrases.} Isoperimetric problem with density, radial weight}

\section{Introduction}

Let $\Omega\subset\re^n$ be a bounded open set with Lipschitz boundary $\partial\Omega.$ We study the inequality
\begin{equation}
 \label{eq:main isop. ineq. intro.}
     \left(\frac{n\LL^n(\Omega)}{\omega_{n-1}}\right)^{\frac{n+p-1}{n}}\leq \frac{1}{\omega_{n-1}}\intdelomega|x|^pd\mathcal{H}^{n-1}(x).
\end{equation}
where $\LL^n(\Omega)$ denotes the $n$ dimensional Lebesgue measure of $\Omega,$  $\mathcal{H}^{n-1}$ is the $n-1$ dimensional Hausdorff measure and $\omega_{n-1}=\mathcal{H}^{n-1}(S^{n-1})$ is the surface area of the unit $n-1$ sphere. For $p=0$ this is the classical isoperimetric inequality.  Note that for balls centered at the origin there is always equality.

For $p<0,$ we introduce a new condition: $\Omega$ shall contain the origin.

The inequality \eqref{eq:main isop. ineq. intro.} is a particular case among a  broader class of problems called isoperimetric problems with densities. Given two positive functions $f,g:\re^n\to \re$ 
one studies the existence of minimizers of
\begin{equation}\label{intro:minimization classical with density}
  I(C)=\inf\left\{\int_{\delomega}g\,d\mathcal{H}^{n-1}:\,\Omega\subset\re^n\text{ and }\int_{\Omega}f\,d\mathcal{L}^n=C\right\}.
\end{equation}
$\intdelomega g\dH$ is called the weighted perimeter.
There are an increasing number of works dealing with different types of weights $f$ and $g,$ see for instance \cite{Rosales Canete Bayle Morgan}, \cite{Cabre 1}, \cite{Cabre 2},  \cite{Chambers Log Convex}, \cite{Figalli-Maggi.LogConvex}, 
\cite{Fusco-Maggi-Pratelli}, \cite{Morgan-Regularity}, \cite{Morgan-Pratelli}. In the case where $f(x)=g(x)=|x|^p$ and $p>0,$  we must mention results appearing in  \cite{Chambers et alt radial p},
\cite{Canete-Miranda-Wittone}, \cite{Carroll-Jacob-Quinn-Walters} and \cite{Dahlberg-Dubbs-Newkirk-Tran}, which, among other results, led to the final surprising fact that the minimizers are hyperspheres passing through the origin. Some other interesting results for the case $f(x)=|x|^q$ and $g(x)=|x|^p$ can be found in \cite{Diaz-Harman-Howe-Thompson}, respectively \cite{Alvino Brock Mercaldo etc.} and \cite{Giosia ...morgans group rk and rm}.

Concerning the inequality \eqref{eq:main isop. ineq. intro.} the following results are already known: In the case $0\leq p<\infty$ the inequality \eqref{eq:main isop. ineq. intro.} always holds.  This has been first proved in \cite{Betta and alt.} for $p\geq 1.$ Another proof of the same result can be found in \cite{Diaz-Harman-Howe-Thompson}, Section 7. Later \eqref{eq:main isop. ineq. intro.} was shown by the author \cite{Csato Diff IntegralEq} for $0\leq p\leq 1$ if $n=2,$ and then by \cite{Alvino Brock Mercaldo etc.} for any $n\geq 3.$ See also the very recent paper \cite{Giosia ...morgans group rk and rm}, which uses variational methods and the study of the Euler equations satisfied by a minimizer. The method of \cite{Alvino Brock Mercaldo etc.} contains a very original interpolation argument with which \eqref{eq:main isop. ineq. intro.} is deduced from the classical isoperimetric inequality. For the sake of completeness we repeat this proof in the special case of starshaped domains, see Proposition \ref{proposition:result of Alvino Brock et alt.}. The general case follows from a more standard, but weighted, symmetrization argument, see \cite{Alvino Brock Mercaldo etc.}.

In the case $p<0$ the only available result is \cite{Csato Diff IntegralEq}. This result shows that if $n=2$ and $-1\leq p\leq 0$ then \eqref{eq:main isop. ineq. intro.} holds true under the additional assumptions $\Omega$ is connected and contains the origin. The motivation for studying negative values of $p$ and adding these additional assumptions come from the singular Moser-Trudinger functional \cite{Adi-Sandeep}. These assumptions arise  naturally in the harmonic transplantation method of Flucher \cite{Flucher} (see \cite{Csato Roy Calc Var} and \cite{Csato Roy Comm PDE}) to establish the existence of extremal functions for the singular Moser-Trudinger functional. Without going into the details, the connection is the following: one uses \eqref{eq:main isop. ineq. intro.} for the level sets of the Greens function $G_{\Omega,0}$ with singularity at $0.$ Obviously these level sets will always contain the origin and will be connected by the maximum principle. 

In the present paper we make the following further contributions. It turns out that there are big differences depending on the dimension $n.$

\smallskip 

\textit{The case $n=1.$} The inequality \eqref{eq:main isop. ineq. intro.} is elementary, but we have included it for completeness. The inequality hols for all $p\in\re\backslash(0,1).$ If $p\in (0,1),$ then minimzers of the weighted perimeter still exist, but they are not intervals centered at the origin.
\smallskip

\textit{The case $n=2.$} 
We settle the case of equality if $0<p<1$ showing that balls centered at the origin are the unique sets satisfying equality. 
Moreover we also deal with $p<-1$ and prove also the sharp form in that case. Thus we settle completely the $2$ dimensional case and the results are 
summarized in Theorem \ref{theorem:main theorem weighted isop. inequlity dim 2}.
\smallskip

\textit{The case $n\geq 3.$}
We completely tackle the case $p<0.$ We prove that \eqref{eq:main isop. ineq. intro.} remains true if $p<-n+1.$ This proof is basically the same  as that of \cite{Betta and alt.}, with a slight difference in the proof of the unicity result. However, the rather surprising result is that the inequality does not hold true for any $p$ between $-n+1$ and $0.$ Particularly interesting is the case $-n+1<p<-n+2.$ In that range the variational method shows that balls centered at the origin are stationary and stable, but nevertheless they are not global minimizers.
We actually prove something stronger: there are no minimizers of the corresponding variational problem \eqref{intro:minimization classical with density} if $-n+1<p<0$ and the infimum is zero.

\section{The 1 dimensional case}\label{section: 1 dim}

In this case $\mathcal{H}^0$ is the counting measure, $\omega_0=2$ and the inequality \eqref{eq:main isop. ineq. intro.} becomes
\begin{equation}
 \label{eq:main isop ineq. n is one}
  \left(\frac{\LL^1(\Omega)}{2}\right)^p\leq \frac{1}{2}\intdelomega|x|^p\,d\mathcal{H}^0(y).
\end{equation}
We assume that $\Omega$ is the union of disjoint open intervals, such that the closed intervals do not intersect. If $p=0,$ and there is just one interval, then the inequality trivially reads as $1=1.$ We have the following proposition.

\begin{proposition}
(i) If $p\geq 1,$ then inequality \eqref{eq:main isop ineq. n is one} holds true for any $\Omega.$ In case of equality $\Omega$ must be single interval and if $p>1$ this interval has to be centered at the origin.
\smallskip

(ii) If $0<p<1,$ then inequality \eqref{eq:main isop ineq. n is one} does not hold true.
\smallskip

(iii) If $p<0,$ then inequality \eqref{eq:main isop ineq. n is one} holds true for all $\Omega$ containing the origin. In case of equality $\Omega$ has to be a single interval centered at the origin.
\end{proposition}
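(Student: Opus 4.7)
Writing $\Omega=\bigcup_{i=1}^k(a_i,b_i)$ with disjoint closures and $L:=\LL^1(\Omega)=\sum_i(b_i-a_i)$, the inequality \eqref{eq:main isop ineq. n is one} becomes
\[
(L/2)^p\ \le\ \tfrac{1}{2}\sum_{i=1}^k\bigl(|a_i|^p+|b_i|^p\bigr).
\]
The plan in (i) and (iii) is to reduce to a single two-point convexity inequality for $t\mapsto t^p$ on $(0,\infty)$---which is convex exactly when $p\ge 1$ or $p\le 0$---combined with the triangle inequality $|a|+|b|\ge b-a$.

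For (i) I would discard every endpoint except the extremes $a:=\min_i a_i$ and $b:=\max_i b_i$. Then $\Omega\subset[a,b]$ gives $L\le b-a\le|a|+|b|$, and the two-point Jensen / power-mean inequality for the convex function $t^p$ yields
\[
|a|^p+|b|^p\ \ge\ 2\bigl((|a|+|b|)/2\bigr)^p\ \ge\ 2(L/2)^p,
\]
from which the claim follows after dividing by two and dropping the remaining non-negative terms.

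For (iii) the assumption $0\in\Omega$ supplies a unique interval $(a_j,b_j)$ containing the origin, for which $|a_j|+|b_j|=b_j-a_j\le L$. The same power-mean step (still convex on $(0,\infty)$ since $p(p-1)>0$) gives $|a_j|^p+|b_j|^p\ge 2((b_j-a_j)/2)^p$, and now the chain closes in the right direction because $t\mapsto(t/2)^p$ is \emph{decreasing} for $p<0$, so $((b_j-a_j)/2)^p\ge (L/2)^p$. Part (ii) is handled by the explicit counterexample $\Omega=(0,L)$: the right-hand side equals $L^p/2$, the left-hand side equals $L^p/2^p$, and $2^p<2$ for $0<p<1$.

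The main subtlety will be the equality analysis. In every case, discarding non-negative terms forces $k=1$; equality in the power-mean step forces $|a_1|=|b_1|$; and equality in the triangle step forces $0\in[a_1,b_1]$. Together these give $a_1=-b_1$ in the strictly convex regimes $p>1$ and $p<0$. For the borderline case $p=1$ the power-mean step is automatically an equality, so only $k=1$ and $0\in[a_1,b_1]$ are forced, which is precisely why ``centered'' is not claimed when $p=1$.
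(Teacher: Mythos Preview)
Your proposal is correct and follows essentially the same route as the paper: for (i) you pick the two extreme boundary points and use that $t\mapsto t^p$ is increasing and convex, for (iii) you pick the endpoints of the interval through the origin and use that $t\mapsto t^p$ is decreasing and convex, and for (ii) you take the same counterexample $\Omega=(0,L)$. Your treatment of the equality cases is in fact more carefully spelled out than the paper's, which merely records the conclusions.
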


\begin{remark}
In case (ii) one can still ask the question whether there is a minimizer for the weighted perimeter, under the constraint $\LL^1(\Omega)=c.$ One can verify that the unique minimizers are the intervals $(0,c)$ or $(-c,0).$
\end{remark}

\begin{proof}
(i) Let $a=\min\{x|\,x\in \delomega\}$ and $b=\max\{x|\,x\in \delomega\}.$ Then $\LL^1(\Omega)\leq (b-a)\leq |a|+|b|$ and the inequality follows from the fact that the map $s\mapsto s^p$ is increasing and convex.
\smallskip

(ii) Take $\Omega=(0,c),$ for some $c>0.$

\smallskip 
(iii) Since $0\in\Omega$ there exists $a,b\in\delomega$ such that $a<0,b>0$ and $(a,b)\subset\Omega.$ We have $\LL^1(\Omega)\geq (b-a)=|a|+|b|.$ Using that $s\mapsto s^p$ is decreasing and convex one obtains easily the result.
\end{proof}

\section{The 2 dimemsional case}

The following theorem summarizes the works by Betta-Brock-Mercaldo-Posteraro \cite{Betta and alt.}, Csat\'o \cite{Csato Diff IntegralEq} and the results proven in the present paper. In this section $|\Omega|=\LL^2(\Omega)$ shall denote the area of a set and $\sigma$ is the $1$-dimensional Hausdorff measure. We will often just say that a set $\Omega$ is $C^k$ meaning that its boundary $\delomega$ is a $C^k$ curve.

\begin{theorem}
\label{theorem:main theorem weighted isop. inequlity dim 2}
Let $\Omega\subset\re^2$ be a bounded open  Lipschitz set. Regarding the inequality
\begin{equation}
 \label{theorem:eq:main isop. ineq.}
  \left(\frac{|\Omega|}{\pi}\right)^{\frac{p+1}{2}}\leq\frac{1}{2\pi}\int_{\delomega}|x|^pd\sigma,
\end{equation}
the following statements hold true:
\smallskip


(i) If $p\geq 0,$ then \eqref{theorem:eq:main isop. ineq.} holds for all $\Omega.$
\smallskip

(ii) If $-1<p<0,$ then \eqref{theorem:eq:main isop. ineq.} holds for all $\Omega$ connected and containing the origin.
\smallskip 

(iii) If $p\leq -1$ then \eqref{theorem:eq:main isop. ineq.} holds for all $\Omega$ containing the origin.
\smallskip

(iv) In case (i), if $\Omega$ is $C^2$ and there is equality in \eqref{theorem:eq:main isop. ineq.}, then $\Omega$ must be a ball; centered at the origin if $p\neq 0.$ If there is equality in case (ii) and (iii) then also  $\Omega$ must be a ball.
\end{theorem}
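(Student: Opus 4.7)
Parts (i) and (ii) are established in the cited literature (\cite{Betta and alt.}, \cite{Alvino Brock Mercaldo etc.}, \cite{Csato Diff IntegralEq}), so I concentrate on the new ingredients: part (iii) and the new equality cases in (iv).

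\textbf{Plan for (iii).} I would adapt the polar-coordinate proof of \cite{Betta and alt.} (originally designed for $p\geq 1$) to $p\leq -1$, exploiting the fact that $(p+1)/2$ is again on the convex side of Jensen's inequality. For starshaped $\Omega$ with boundary described by $r(\theta)$, $\theta\in[0,2\pi]$, combine the pointwise estimate $\sqrt{r^2+r'^2}\geq r$, which gives
\[
  \int_0^{2\pi}r^p\sqrt{r^2+r'^2}\,d\theta \,\geq\, \int_0^{2\pi}r^{p+1}\,d\theta,
\]
with Jensen's inequality applied to the convex function $t\mapsto t^{(p+1)/2}$ (convex since $(p+1)/2\leq 0$),
\[
  \left(\tfrac{1}{2\pi}\int_0^{2\pi}r^2\,d\theta\right)^{(p+1)/2} \,\leq\, \tfrac{1}{2\pi}\int_0^{2\pi}r^{p+1}\,d\theta.
\]
Since $|\Omega|=\tfrac{1}{2}\int_0^{2\pi}r^2\,d\theta$, this produces \eqref{theorem:eq:main isop. ineq.}. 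For general $\Omega$ containing the origin, I would introduce the first exit radius $R(\theta)$ and the starshape hull $\Omega^{*}=\{(r,\theta):0<r<R(\theta)\}$. In an arc-length parametrisation of $\partial\Omega$, the bound $|x'|\geq \rho|\theta'|$ combined with a coarea change of variable $\theta\mapsto\theta(t)$ yields $\int_{\partial\Omega}|x|^p\,d\sigma\geq \int_0^{2\pi}\sum_{i}\rho_{i}(\theta)^{p+1}\,d\theta$, where $\rho_1<\rho_2<\dots$ are the radii of intersections of the ray at angle $\theta$ with $\partial\Omega$. Since $\rho\mapsto\rho^{p+1}$ is decreasing for $p+1\leq 0$ and $R(\theta)=\rho_1(\theta)$, each sum is at least $R(\theta)^{p+1}$, and Jensen applied to $R(\theta)$ closes the argument (using $|\Omega|\geq |\Omega^{*}|$ together with the fact that $t\mapsto t^{(p+1)/2}$ is decreasing).

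\textbf{Plan for (iv).} I would read the equality conditions off each step of the chain above. For $p<-1$ strict, Jensen's inequality is strict unless $r$ is constant; combined with $r'\equiv 0$ from the pointwise estimate, with a single intersection per ray, and with $|\Omega|=|\Omega^{*}|$ (so $\Omega$ starshaped), this forces a disk centered at the origin. At $p=-1$ Jensen becomes trivial but the remaining pointwise conditions still yield the same conclusion, because a starshaped $\Omega$ whose polar function is constant along its (necessarily connected) boundary must be a centered disk. The cases $-1<p<0$ and $p\geq 1$ are covered by \cite{Csato Diff IntegralEq} and \cite{Betta and alt.}. The delicate new case is $0<p<1$, where the inequality passes through the Alvino-Brock interpolation (Proposition \ref{proposition:result of Alvino Brock et alt.}), which is not a chain of pointwise inequalities. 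My plan there is to use the $C^{2}$ assumption to put $\partial\Omega$ into the Euler-Lagrange equation of the constrained problem, namely $|x|^p H+p|x|^{p-1}\langle x/|x|,\nu\rangle=\mathrm{const.}$, and then deduce rigidity either by direct ODE analysis of this equation in polar form or via a Pohozaev-type identity, to conclude that only disks centered at the origin can be extremisers. The main obstacle I anticipate is precisely this last step: the interpolation proof of the base inequality does not reveal the equality cases, so a separate rigidity argument based on the Euler equation is needed.
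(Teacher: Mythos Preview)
Your plan for (iii) is essentially the paper's argument: the paper proves a general statement (Theorem~\ref{theorem:p leq -n+1}) for any dimension $n\ge 2$ and any weight $a$ with $t\mapsto a(t^{1/n})t^{(n-1)/n}$ non-increasing and convex, and (iii) is the case $n=2$, $a(t)=t^{p}$. The paper also reduces to the first-exit ``starshape hull'' (your $\Omega^{*}$), bounds the surface element from below by the spherical one, and finishes with Jensen for $h(t)=t^{(p+1)/2}$ exactly as you describe.

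For the equality statement in (iii), your idea of reading off the equality conditions is workable for $p<-1$ (strict convexity of Jensen), but at $p=-1$ both Jensen and the volume comparison $(|\Omega|/\pi)^{0}=(|\Omega^{*}|/\pi)^{0}$ become trivial, so neither forces $\Omega=\Omega^{*}$ nor $R\equiv\text{const}$; you would have to extract everything from the surface-element equality and the ``one intersection per ray'' condition, which requires some care. The paper bypasses this by a different and rather clean trick (Step~5 of Theorem~\ref{theorem:p leq -n+1}): replace $a(t)=t^{p}$ by $\tilde a(t)=a(t)-a(M)$ (cut off at $M=\sup_{\partial\Omega}|x|$), apply the already-proved inequality to $\tilde a$, and subtract. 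Under the equality assumption this yields $\mathcal{H}^{1}(\partial\Omega)\le \mathcal{H}^{1}(\partial B_{R})$, whence the classical isoperimetric inequality forces $\Omega$ to be a ball; since $0\in\Omega$, the starshaped case (your direct argument) finishes.

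For the equality case in (i) with $0<p\le 1$: first a minor correction---in two dimensions the inequality itself is from \cite{Csato Diff IntegralEq}, not from the Alvino--Brock interpolation (Proposition~\ref{proposition:result of Alvino Brock et alt.} is $n\ge 3$), and the equality case was already settled there except when $0\in\partial\Omega$. So the genuinely new case is: rule out $0\in\partial\Omega$. Your Euler--Lagrange set-up is the right start, and the paper uses exactly that equation, but the missing idea you anticipate is the following two-point comparison. After a simple-connectedness reduction, one uses ODE uniqueness for the Euler equation (Lemma~\ref{lemma:symmetry with respect to axis}) to show that $\partial\Omega$ is symmetric about the line through $0$ and a point $P$ of maximal distance $d=|P|$. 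At $P$ the generalised curvature satisfies $k=p\,d^{p-1}+d^{p}\kappa(P)\ge p\,d^{p-1}>0$. Near $0$ one reparametrises $\partial\Omega$ as a $C^{2}$ graph $t\mapsto(f(t),t)$ with $f(0)=0$; since $(tf'(t)-f(t))/(t^{2}+f(t)^{2})$ stays bounded and $|x|^{p}\to 0$, the generalised curvature tends to $0$ along $\partial\Omega$ as $x\to 0$. This contradicts the constancy of $k$. That contradiction is the rigidity argument you were looking for; neither a Pohozaev identity nor a global polar ODE analysis is needed.
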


\begin{remark}
It follows from Morgan \cite{Morgan-Regularity} that if one has equality in \eqref{theorem:eq:main isop. ineq.} (i.e. $\Omega$ is a minimizer), then $\delomega\backslash\{0\}$ is smooth. On the other hand if one shows the inequality \eqref{theorem:eq:main isop. ineq.} for smooth sets, then it also holds for Lipschitz sets by approximation. So we can work with smooth sets and have to be careful only if $0\in \delomega.$
\end{remark}

\begin{proof}
(i) and (ii) have been proven in \cite{Csato Diff IntegralEq} and \cite{Betta and alt.}, (iii) will be proven in Theorem \ref{theorem:p leq -n+1}. The case of equality has been proven for (ii) in \cite{Csato Diff IntegralEq} and for (iii) it is again a special case of Theorem \ref{theorem:p leq -n+1}. So it remains to deal with the case of equality in (i). This has also been dealt with in \cite{Csato Diff IntegralEq} as long as $0\notin \delomega$ or $p>1.$  The case $0\in \delomega$ is Proposition \ref{proposition:p between 0 and 1 sharp form}
\end{proof}

\smallskip

The rest of this section is devoted to the proof of the following Proposition \ref{proposition:p between 0 and 1 sharp form}. It is based on variational methods and a careful analysis of the resulting Euler-Lagrange equation.


\begin{proposition}
\label{proposition:p between 0 and 1 sharp form}
Suppose $0<p\leq 1$ and $\Omega\subset\re^2$ is a bounded open set with $C^2$ boundary $\delomega.$ If
$$
  \left(\frac{|\Omega|}{\pi}\right)^{\frac{p+1}{2}}=\frac{1}{2\pi}\int_{\delomega}|x|^pd\sigma(x),
$$
then $0$ cannot lie on the boundary $\delomega.$
\end{proposition}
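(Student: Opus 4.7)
The plan is to argue by contradiction from the Euler--Lagrange equation satisfied by a minimizer. Assume toward a contradiction that $0\in\delomega.$ Equality in the isoperimetric inequality together with the fact that balls centred at the origin realise it says that $\Omega$ is itself a minimizer of the weighted perimeter $\mathcal{P}_p(\Omega):=\intdelomega|x|^p\,d\sigma$ subject to the area constraint $|\Omega|$ fixed. Consequently there is a Lagrange multiplier $\lambda\in\re$ such that $\mathcal{P}_p-\lambda|\cdot|$ is stationary on every smooth one-parameter variation of $\Omega.$ Since the density $|x|^p$ is smooth off the origin and $\delomega$ is $C^2,$ the standard first variation formula yields the pointwise identity
$$|x|^p\kappa(x)+p|x|^{p-2}\bigl(x\cdot\nu(x)\bigr)=\lambda\qquad\text{for every }x\in\delomega\setminus\{0\},\qquad(\ast)$$
where $\kappa$ denotes the signed curvature of $\delomega$ and $\nu$ the outward unit normal.

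First I would compute $\lambda$ by applying the stationarity to the dilation family $\Phi_t(x)=(1+t)x.$ Homogeneity gives $\mathcal{P}_p((1+t)\Omega)=(1+t)^{p+1}\mathcal{P}_p(\Omega)$ and $|(1+t)\Omega|=(1+t)^{2}|\Omega|,$ so differentiating $\mathcal{P}_p-\lambda|\cdot|$ at $t=0$ produces
$$\lambda\;=\;\frac{(p+1)\,\mathcal{P}_p(\Omega)}{2|\Omega|}\;>\;0,$$
the strict positivity following from $p>0$ together with $|\Omega|>0.$

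Next I would take the limit $x\to 0$ along $\delomega$ in $(\ast)$ and show that the left hand side tends to $0.$ Using the $C^{2}$ hypothesis, parametrise $\delomega$ near the origin by arclength $s,$ writing $x(s)=s\tau+\tfrac{\kappa_{0}}{2}s^{2}n_{0}+O(s^{3}),$ where $\tau$ is the unit tangent, $n_{0}$ the inward unit normal at $0,$ and $\kappa_{0}=\kappa(0).$ A direct Taylor computation yields $|x(s)|=|s|+O(|s|^{3})$ and $x(s)\cdot\nu(x(s))=\tfrac{\kappa_{0}}{2}s^{2}+O(|s|^{3}),$ whence
$$|x|^{p}\kappa(x)\;=\;\kappa_{0}|s|^{p}+O(|s|^{p+1}),\qquad p|x|^{p-2}\bigl(x\cdot\nu(x)\bigr)\;=\;\tfrac{p\kappa_{0}}{2}|s|^{p}+O(|s|^{p+1}).$$
Both quantities tend to $0$ as $s\to 0$ because $p>0,$ so $(\ast)$ forces $\lambda=0,$ contradicting the positivity of $\lambda$ obtained by scaling. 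This completes the contradiction.

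The main obstacle is the step of passing to the limit $x\to 0$ in $(\ast)$ \emph{right through} the singular point of the density. The $C^{2}$ hypothesis on $\delomega$ is essential here: it is precisely what produces the quadratic vanishing $x\cdot\nu=O(s^{2})$ that cancels the $|x|^{p-2}$ blow-up in the second term, and this cancellation only works while $p>0.$ That the argument genuinely fails once $p\leq 0$ is consistent with the contrasting conclusions of the paper in that regime, and with the fact that the weighted mean curvature is truly singular at the origin for non-positive $p.$
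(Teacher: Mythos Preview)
Your argument is correct and in fact cleaner than the route taken in the paper. Both proofs hinge on the same Euler--Lagrange identity $(\ast)$ (the paper's Lemma \ref{lemma:variational equation p-2 and p}) and both finish by showing that the generalized curvature tends to $0$ along $\delomega$ as $x\to 0$, contradicting its positivity. The differences lie in how each of these two facts is obtained.

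For the positivity of the constant, the paper first reduces to $\Omega$ simply connected, then invokes a symmetry lemma (Lemma \ref{lemma:symmetry with respect to axis}) to locate the point $P\in\delomega$ of maximal distance from the origin, and evaluates the generalized curvature at $P$ to get $k\geq p\,d^{p-1}>0$. Your scaling computation bypasses all of this: from the homogeneity of $\mathcal{P}_p$ and $|\cdot|$ you read off $\lambda=(p+1)\mathcal{P}_p(\Omega)/(2|\Omega|)>0$ directly, and (via the rescaling $\widehat{\Omega}_t=\mu_t\Omega_t$, exactly as in Step~1 of the paper's Lemma \ref{lemma:variational equation p-2 and p}) this same $\lambda$ is the constant appearing in $(\ast)$ on every component of $\delomega$. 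In particular you never need the topological reduction to a single closed curve.

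For the limit at the origin, the paper again leans on the symmetry lemma to force $\gamma_1'(0)=0$, which is what makes the graph reparametrization $t\mapsto(f(t),t)$ available; it then analyses the resulting one-variable expression via l'H\^opital. Your arclength Taylor expansion is more direct: the $C^2$ hypothesis alone gives $x\cdot\nu=\tfrac{\kappa_0}{2}s^2+O(|s|^3)$ and $|x|=|s|+O(|s|^3)$, so both terms in $(\ast)$ are $O(|s|^p)\to 0$, with no need for any preliminary symmetry or special coordinate choice.

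In short, your proof is a genuine simplification: it removes the dependence on Lemma \ref{lemma:symmetry with respect to axis} and on the simply-connected reduction. The paper's approach does have the side benefit that the symmetry lemma is of independent interest (and is a standard tool for such problems), but for the proposition at hand your route is shorter and more transparent.
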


The proof of Proposition \ref{proposition:p between 0 and 1 sharp form} is based on two lemmas. The first one, Lemma \ref{lemma:variational equation p-2 and p}, is the variational formula that we need, establishing the Euler-Lagrange equation satisfied by a minimizer. It is the generalization of the statement that minimizers of the classical isoperimetric problem have constant curvature, with the difference that one introduces a generalized curvature. Such formulae are broadly used to deal with isoperimetric problems with densities, see for instance \cite{Rosales Canete Bayle Morgan}, \cite{Chambers Log Convex}, \cite{Dahlberg-Dubbs-Newkirk-Tran} \cite{Diaz-Harman-Howe-Thompson}, respectively \cite{Morgan Vari Formulae} for a summary. Since in the present case the derivation is very short and elementary, we provide the proof to make the presentation self-contained. Afterwards, in Lemma \ref{lemma:symmetry with respect to axis}, one uses the symmetry properties of the Euler-Lagrange equation to show that 
minimizers are symmetric with respect to any line through the origin and a point $P$ on $\delomega$ with maximal distance from the origin. Such symmetrization arguments are also well known, see for instance Lemma 2.1 in \cite{Dahlberg-Dubbs-Newkirk-Tran}. Then to conclude the proof of Proposition \ref{proposition:p between 0 and 1 sharp form} one essentially compares the generalized curvature, which has to be constant, at the point $P$ and at the origin, which will  lead to a contradiction. 

We shall use the following notation: if $\Omega$ is simply connected, then $\gamma:[0,L]\to\delomega$ shall always denote a simple closed curve bounding $\Omega.$ Mostly we will also assume that 
\begin{equation}\label{eq:general choice of curve}
  |\gamma'|=1\quad\text{ and }
  \quad\nu=(\gamma'_2,-\gamma_1')\quad\text{ is the outward unit normal to }\delomega.
\end{equation}
This can always be achieved by reparametrization and chosing the orientation properly. The prime $(\cdot)'$ shall always denote the derivative with respect to the argument of $\gamma.$
If $|\gamma'|=1$ then the curvature $\kappa$ of $\delomega$ calculates as $\kappa(t)=\langle\gamma'(t),\nu'(t)\rangle.$

\begin{lemma}
\label{lemma:variational equation p-2 and p}
Let $p\in\re,$ $C>0$ and suppose that $\Omega$ is a $C^2$ minimizer of
\begin{equation}
 \label{lemma:first variation formula in n is 2}
  \inf\left\{\int_{\delomega}|x|^pd\sigma;\;|\Omega|=C\right\}.
\end{equation}
Assume $\gamma:[0,L]\to\partial\Omega$ is a simple closed curve. Then
\begin{equation}
 \label{eq:lemma var. method. p and p-2}
  p|\gamma(t)|^{p-2}\langle\gamma(t),\nu(t)\rangle+|\gamma(t)|^p \kappa(t)=\text{constant}\quad\forall\;t\in[0,L]
\end{equation}
if $0\notin\delomega.$ Whereas if $\gamma(t_0)=0$ for some $t_0\in [0,L],$ then \eqref{eq:lemma var. method. p and p-2} holds for all $t\in[0,L]\backslash\{t_0\}.$
\end{lemma}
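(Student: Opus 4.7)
My plan is to derive the Euler--Lagrange equation \eqref{eq:lemma var. method. p and p-2} by a standard normal-perturbation argument. Fix a test function $\phi\in C_c^\infty((0,L))$ that, in the singular case $\gamma(t_0)=0,$ vanishes in a neighborhood of $t_0,$ and consider the one-parameter family
\begin{equation*}
  \gamma_s(t)=\gamma(t)+s\phi(t)\nu(t),
\end{equation*}
which, for $|s|$ small, is a simple closed $C^2$ curve bounding a domain $\Omega_s$ with $\Omega_0=\Omega.$ By the Lagrange multiplier rule applied to \eqref{lemma:first variation formula in n is 2}, there is a constant $\lambda\in\re$ such that the first variation of $\int_{\partial\Omega_s}|x|^pd\sigma$ equals $\lambda$ times the first variation of $|\Omega_s|$ for every admissible $\phi.$ It therefore suffices to identify both first variations as integrals against $\phi$ and compare integrands.

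The area variation is standard: the vector field $\phi\nu$ has normal component $\phi,$ so $\frac{d}{ds}|_{s=0}|\Omega_s|=\int_0^L\phi(t)\,dt.$ For the weighted perimeter I need two pointwise derivatives. First, since $\gamma(t)\neq 0$ on $\operatorname{supp}\phi,$
\begin{equation*}
  \left.\frac{d}{ds}\right|_{s=0}|\gamma_s(t)|^p=p|\gamma(t)|^{p-2}\langle\gamma(t),\nu(t)\rangle\phi(t).
\end{equation*}
Second, I observe that $|\nu|=1$ gives $\langle\nu,\nu'\rangle=0,$ so $\nu'$ is tangent to $\delomega;$ combined with $|\gamma'|=1$ and the definition $\kappa=\langle\gamma',\nu'\rangle,$ this forces $\nu'=\kappa\gamma'.$ Hence $\gamma_s'=(1+s\phi\kappa)\gamma'+s\phi'\nu,$ and $|\gamma_s'(t)|=1+s\phi(t)\kappa(t)+O(s^2).$ Combining these two expansions yields
\begin{equation*}
  \left.\frac{d}{ds}\right|_{s=0}\int_{\partial\Omega_s}|x|^p\,d\sigma=\int_0^L\phi(t)\left[p|\gamma|^{p-2}\langle\gamma,\nu\rangle+|\gamma|^p\kappa\right]dt.
\end{equation*}

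Plugging these expressions into the Lagrange multiplier identity and varying $\phi,$ the fundamental lemma of the calculus of variations gives
\begin{equation*}
  p|\gamma(t)|^{p-2}\langle\gamma(t),\nu(t)\rangle+|\gamma(t)|^p\kappa(t)=\lambda
\end{equation*}
at every $t\in[0,L]$ with $\gamma(t)\neq 0.$ The only subtle point is the parameter $t_0$ with $\gamma(t_0)=0:$ the family $s\mapsto|\gamma_s(t_0)|^p$ is not smooth in $s,$ and the left-hand side of the target identity is itself singular there, so admissible perturbations cannot charge such a $t_0;$ this accounts exactly for the stated exception. This handling of the singular boundary point is the only technical issue in the argument, and the rest is a routine first-variation computation.
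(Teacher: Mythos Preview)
Your proof is correct and follows essentially the same strategy as the paper's: normal perturbations $\gamma_s=\gamma+s\phi\nu$, computation of the first variations of area and weighted perimeter, and conclusion via the fundamental lemma of the calculus of variations. The only notable differences are cosmetic: the paper justifies the multiplier relation by an explicit rescaling argument (rescaling $\Omega_s$ to restore the area constraint and deriving a contradiction) rather than invoking the Lagrange multiplier rule as a black box, and it computes $\frac{d}{ds}|\gamma_s'|$ directly via inner products rather than first observing $\nu'=\kappa\gamma'$ as you do---your route here is in fact slightly slicker.
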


\begin{remark}
The function $k= p|\gamma|^{p-2}\langle\gamma,\nu\rangle+|\gamma|^p \kappa$ is usually called the generalized curvature of $\delomega.$
\end{remark}

\begin{proof}
\textit{Step 1.}
Without loss of generality we assume the second case $0\in\delomega$ and assume that $\gamma(0)=\gamma(L)=0.$ We can also assume that \eqref{eq:general choice of curve} holds, since \eqref{eq:lemma var. method. p and p-2} is independent of the parametrization. Let $h\in C^{\infty}_c(0,L)$ be arbitrary and define the curve $g_s(t)=\gamma(t)+sh(t)\nu(t).$
For all $s$ small enough this is also a simple closed curve bounding a domain $\Omega_s$. Because $\Omega$ is a minimizer, we claim that 
\begin{equation}
 \label{eq:variational approach main condition p and p-2}
  \text{If }\quad\frac{d}{ds}\left[|\Omega_s|\right]_{s=0}=0\quad\text{ then }\quad \frac{d}{ds}\left[\int_{\delomega_s}|x|^pd\sigma\right]_{s=0}=0.
\end{equation}
Let us show \eqref{eq:variational approach main condition p and p-2}.
If this is not the case, this means that the function $h$ is such that the first equality in \eqref{eq:variational approach main condition p and p-2} holds true but
$$
  \frac{d}{ds}\left[\int_{\delomega_s}|x|^pd\sigma\right]_{s=0}\neq 0.
$$
Then define $\widehat{\Omega}_s=\lambda_s\Omega_s$ where $\lambda_s=\sqrt{{|\Omega|}}/\sqrt{{|\Omega_s|}}.$
Note that $|\widehat{\Omega}_s|=C$ for all $s$ and that, by the first iquality in \eqref{eq:variational approach main condition p and p-2}, $d/ds\,(\lambda_s)=0$ at $s=0.$ Hence we obtain that
$$
  \frac{d}{ds}\int_{\partial\widehat{\Omega}_s}|x|^pd\sigma\Bigg|_{s=0}= \frac{d}{ds}\left[\lambda_s^{p+1}\int_{\partial\Omega_s}|x|^pd\sigma\right]_{s=0}=  \frac{d}{ds}\left[\int_{\partial\Omega_s} |x|^pd\sigma\right]_{s=0}\neq 0.
$$
Thus for sufficiently small $s$ (negative or positive depending on the sign of the last inequality) the weighted perimeter of $\widehat{\Omega}_s$ is strictly smaller than that of $\Omega.$
This  contradicts the fact that $\Omega$ is a minimizer. 
\smallskip

\textit{Step 2.} We now calculate the derivatives in  \eqref{eq:variational approach main condition p and p-2} explcitly. First one gets
$$
  g_s=\gamma+sh\nu,\qquad g_s'=\gamma'+sh'\nu+sh\nu'.
$$
We therefore obtain
$$
  |\Omega_s|=\int_0^L(g_s)_1 (g_s)_2'dt=\int_0^L(\gamma_1+sh\nu_1) (\gamma_2'+sh'\nu_2+sh\nu_2'),
$$
which leads to, using partial integration to get rid of derivatives of $h$,
$$
  \frac{d}{ds}|\Omega_s|\Big|_{s=0}=\int_0^L(h'\gamma_1\nu_2+h\gamma_1\nu_2' +h\nu_1\gamma_2')=\int_0^Lh|\gamma'|^2=\int_0^Lh.
$$
On the other hand we have
$$
  \int_{\partial\Omega_s}|x|^pd\sigma=\int_0^L|g_s(t)|^p|g_s'(t)| dt.
$$
We therefor obtain that
$$
  \frac{d}{ds}\left[\int_{\delomega_s}|x|^pd\sigma\right]_{s=0}=A+B,
$$
where
\begin{align*}
 A=&\left[\int_0^L p|g_s(t)|^{p-1}\left(\frac{d}{ds}|g_s(t)|\right)|g_s'(t)| dt\right]_{s=0}
 \smallskip \\
 B=&\left[\int_0^L|g_s(t)|^p \frac{d}{ds}|g_s'(t)|\,dt\right]_{s=0}.
\end{align*}
Note that
$$
  |g_s|=\sqrt{\langle\gamma+sh\nu,\gamma+sh\nu\rangle}\quad\text{ and }\quad \frac{d}{ds}|g_s|\Big|_{s=0}=\frac{h\langle\gamma,\nu\rangle}{|\gamma|}.
$$
Thus we get
$$
  A=\int_0^Lp|\gamma|^{p-2}h\langle\gamma,\nu\rangle dt.
$$
Let us now calculate $B.$  As above
$$
  |g_s'|=\sqrt{\langle\gamma'+s h'\nu+sh\nu',\gamma'+sh'\nu+sh\nu'\rangle}.
$$
This leads to
$$
  \frac{d}{ds}|g_s'|\Big|_{s=0}=\frac{1}{|\gamma'|}(h'\langle\gamma',\nu\rangle +h\langle \gamma',\nu'\rangle)=h\langle\gamma',\nu'\rangle.
$$
Setting this into $B$ and adding $A+B$ we finally obtain that \eqref{eq:variational approach main condition p and p-2} 
 implies that 
$$
  \int_0^L \left(p|\gamma|^{p-2}\langle \gamma,\nu\rangle+|\gamma|^p\langle\gamma',\nu'\rangle\right)h\,dt=0\quad\forall\,h\in C_{c}^{\infty}(0,L)\text{ with }\int_0^L hdt=0.
$$
This implies the claim of the lemma.
\end{proof}

\begin{lemma}
\label{lemma:symmetry with respect to axis}
Let $p\in \re,$ $C>0$ and suppose that $\Omega$ is a $C^2$ minimizer of \eqref{lemma:first variation formula in n is 2}.
Assume $\gamma:[0,L]\to\partial\Omega$ is a simple closed curve with $|\gamma'|=1.$ Suppose there exists $t_0\in[0,L]$ such that
$$
  \gamma(t_0)\neq 0\quad\text{ and }\quad\frac{d}{dt}|\gamma(t)|\Big|_{t=t_0}=0.
$$
Then $\gamma$ is symmetric with respect to the line through $\gamma(t_0)$ and the origin.
\end{lemma}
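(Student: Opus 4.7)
The plan is to combine the Euler--Lagrange equation of Lemma \ref{lemma:variational equation p-2 and p} with uniqueness of a second--order ODE. After rotating coordinates I may assume $\gamma(t_0)=(r_0,0)$ with $r_0=|\gamma(t_0)|>0$, so the alleged axis of symmetry is the $x_1$--axis. Using $|\gamma'|=1$, the hypothesis $\frac{d}{dt}|\gamma(t)|\big|_{t=t_0}=0$ is equivalent to $\langle\gamma(t_0),\gamma'(t_0)\rangle=0$, so $\gamma'(t_0)=(0,\varepsilon)$ with $\varepsilon\in\{-1,+1\}$. Setting $R(x_1,x_2):=(x_1,-x_2)$, I would introduce the reflected, orientation--reversed reparametrization
\[
  \tilde\gamma(t):=R\gamma(2t_0-t).
\]
A direct computation yields $\tilde\gamma(t_0)=\gamma(t_0)$ and $\tilde\gamma'(t_0)=-R\gamma'(t_0)=(0,\varepsilon)=\gamma'(t_0)$, so $\gamma$ and $\tilde\gamma$ have identical Cauchy data at $t_0$.

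Next I would check that $\tilde\gamma$ satisfies the same Euler--Lagrange equation as $\gamma$, with the same constant on the right--hand side. Since $R$ is an isometry, $|\tilde\gamma'|=1$; a short calculation shows that the outward unit normal transforms as $\tilde\nu(t)=R\nu(2t_0-t)$, and therefore
\[
  |\tilde\gamma(t)|=|\gamma(2t_0-t)|,\quad \langle\tilde\gamma,\tilde\nu\rangle(t)=\langle\gamma,\nu\rangle(2t_0-t),\quad \tilde\kappa(t)=\kappa(2t_0-t).
\]
Consequently, if $\gamma$ satisfies $p|\gamma|^{p-2}\langle\gamma,\nu\rangle+|\gamma|^p\kappa\equiv c$ (which is guaranteed by Lemma \ref{lemma:variational equation p-2 and p} since $\Omega$ is a minimizer), then so does $\tilde\gamma$ with the same $c$.

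To conclude I would use that on $\{\gamma\neq 0\}$ the Euler--Lagrange equation solves for $\kappa$ as a smooth function of $(\gamma,\gamma')$,
\[
  \kappa=\frac{c-p|\gamma|^{p-2}\langle\gamma,\nu\rangle}{|\gamma|^p},
\]
and, together with the identity $\gamma''=-\kappa\nu$ forced by $|\gamma'|\equiv 1$, this rewrites everything as an autonomous first--order ODE system on $(\re^2\setminus\{0\})\times S^1$ for $(\gamma,\gamma')$ with locally Lipschitz right--hand side. Cauchy--Lipschitz uniqueness, applied to $\gamma$ and $\tilde\gamma$ with identical initial data at $t_0$, then gives $\tilde\gamma\equiv\gamma$ on the maximal connected subinterval of $[0,L]$ around $t_0$ on which $\gamma$ avoids the origin; if $\gamma$ never touches the origin this is all of $[0,L]$, and otherwise the $C^2$ regularity of $\gamma$ lets one propagate the identification $\tilde\gamma=\gamma$ across the isolated zeros by continuity. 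Either way this proves the claimed symmetry of $\delomega$ about the line through $\gamma(t_0)$ and $0$.

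The main obstacle I expect is not conceptual but bookkeeping: verifying precisely that the rotation-- and reflection--invariant quantities $|\gamma|$, $\langle\gamma,\nu\rangle$, and $\kappa$ transform as stated under the combined operation of $R$ and the orientation reversal $t\mapsto 2t_0-t$, so that the outward normal is still outward and the curvature keeps its sign, and hence $\tilde\gamma$ is a solution of the same ODE with the same constant $c$ rather than $-c$.
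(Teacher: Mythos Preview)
Your proposal is correct and follows essentially the same approach as the paper: both reduce the Euler--Lagrange equation of Lemma~\ref{lemma:variational equation p-2 and p} together with $|\gamma'|=1$ to an autonomous second--order ODE with locally Lipschitz right--hand side on $\re^2\setminus\{0\}$, and then invoke Cauchy--Lipschitz uniqueness to identify $\gamma$ with its reflection. The only cosmetic difference is that the paper writes the system out explicitly in components $(\gamma_1'',\gamma_2'')=(F_1,F_2)(\gamma,\gamma')$ and checks the parity relations $F_1(\gamma_1,-\gamma_2,-\alpha_1,\alpha_2)=F_1$, $F_2(\gamma_1,-\gamma_2,-\alpha_1,\alpha_2)=-F_2$, whereas you argue geometrically via the invariance of $|\gamma|$, $\langle\gamma,\nu\rangle$, and $\kappa$ under the combined reflection--reversal; the paper's handling of the case $\gamma=0$ is at the same level of detail as yours.
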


\begin{proof} Without loss of generality we can assume (by rotation and reparametrization) that $t_0=0$ and $\gamma(0)=(\gamma_1(0),0)$ and $\gamma_1(0)>0.$ By hypothesis $\langle\gamma(0),\gamma'(0)\rangle =0$ and hence $\gamma_1'(0)=0.$ So using Lemma \ref{lemma:variational equation p-2 and p}, $\gamma$ satisfies the two equations
\begin{equation}
 \label{eq:raw version of ode. proof lemma}
  |\gamma'|^2=1\quad\text{ and }\quad p|\gamma|^{p-2} \langle\gamma,\nu\rangle+|\gamma|^p\langle\gamma',\nu'\rangle=k,
\end{equation}
for some constant $k.$ Deriving the first equation with respect to $t$ and multiplying with $|\gamma|^p$ we get that $|\gamma|^p(\gamma_1'\gamma_1''+\gamma_2'\gamma_2'')=0.$
Finally, multiplying this equation, respectively the second in \eqref{eq:raw version of ode. proof lemma}, with $\gamma_1'$ or $\gamma_2'$ and combining properly one easily gets that (using once more $|\gamma'|^2=1$)
\begin{align*}
 \gamma_1''=&\frac{\gamma_2'}{|\gamma|^p}\left(-k+p|\gamma|^{p-2}(\gamma_1\gamma_2'-\gamma_2\gamma_1')\right)=:F_1(\gamma,\gamma')
 \smallskip \\
 \gamma_2''=&\frac{\gamma_1'}{|\gamma|^p}\left(k-p|\gamma|^{p-2}(\gamma_1\gamma_2'-\gamma_2\gamma_1')\right)=:F_2(\gamma,\gamma').
\end{align*}
Thus, setting $\alpha=\gamma',$ we see that $(\gamma,\alpha)$ satisfies the initial value problem
\begin{align*}
 \gamma(0)=&(\gamma_1(0),0)\quad\text{ and }\quad \alpha(0)=(0,\gamma_2'(0))
 \smallskip \\
 &\left(\begin{array}{c}
        \gamma \\ \alpha
       \end{array}\right)'
       = \left(\begin{array}{c}
        \alpha \\ (F_1(\gamma,\alpha),F_2(\gamma,\alpha))
       \end{array}\right).
\end{align*}
The functions $F_1$ and $F_2$ are $C^1$ and hence Lipschitz as long as $\gamma\neq 0.$ They have the properties
$$
  F_1(\gamma_1,-\gamma_2,-\alpha_1,\alpha_2)=F_1(\gamma_1,\gamma_2,\alpha_1,\alpha_2),
  \qquad 
  F_2(\gamma_1,-\gamma_2,-\alpha_1,\alpha_2)=-F_2(\gamma_1,\gamma_2,\alpha_1,\alpha_2)
$$
Using these properties, it can be easily verified, by evaluating the differential equation at $-t,$ that also the curve $\omega(t)=(\gamma_1(-t),-\gamma_2(-t))$ satisfies the initial value problem. By uniqueness we obtain that $\omega(t)=\gamma(t).$ Since $(F_1,F_2)$ is locally Lipschitz, by the theorey of ordinary differential equations (see for instance \cite{Walter} page 68) the solution exists either for all times $t,$ it blows up or goes out of the region of definition of $(F_1,F_2).$ In the present case this means that the solution $\gamma$ exists and is unique for all times $t,$ unless $|\gamma|$ goes to $0$ or to $\infty.$ But it cannot go to infinity, because then the weighted perimiter would go to infinity and then $\Omega$ cannot be a minimizer. So we conclude, using that $\gamma$ is continuous by assumption, that $\omega(t)=\gamma(t)$ for all $t$ and this shows the claim of the lemma.  
\end{proof}

\smallskip

\begin{proof}[Proof of Proposition \ref{proposition:p between 0 and 1 sharp form}]
\textit{Step 1.} Let us show first that $\Omega$ has to be a simply connected set. If $\Omega$ is not connected, let us say $\Omega=\Omega_1\cup\Omega_2$ and $\Omega_1\cap\Omega_2=\emptyset$ for two nonempty sets, then choose $R_1,R_2>0$ such that $|\Omega_i|=\pi R_i^2.$ Using the hypothesis that there is equality in Proposition 4 and Theorem \ref{theorem:main theorem weighted isop. inequlity dim 2} part (i) for $\Omega_i$ we get
\begin{align*}
  (R_1^2+R_2^2)^{\frac{p+1}{2}}=&\left(\frac{|\Omega|}{\pi}\right)^{\frac{p+1}{2}}=\frac{1}{2\pi}\left(\int_{\delomega_1}|x|^pd\sigma+\int_{\delomega_2}|x|^pd\sigma\right)\geq R_1^{p+1}+R_2^{p+1}
  \smallskip \\
  >\left(R_1^2+R_2^2\right)^{\frac{p+1}{2}}
\end{align*}
In the last inequality we have assumed that $0<p<1$ and used that $a^s+b^s>(a+b)^s,$ if $0<s<1$ and $a,b>0.$ 
If $p=1$ there is still strict inequality because for one of the $i=1,2$ we must have $\int_{\delomega_i}|x|^p>2\pi R_i^{(p+1)/2}.$ If not, assume that we have shown Theorem \ref{theorem:main theorem weighted isop. inequlity dim 2} (iv) first for connected sets. Then both $\Omega_1$ and $\Omega_2$ would have to be balls centered at the origin, a contradiction to $\Omega_1\cap\Omega_2=\emptyset.$

Assume now that $\Omega$ is not simply connected and is of the form $\Omega=\Omega_0\backslash\overline{\Omega_1}$ for some $\Omega_1\subset\Omega.$
Then we obtain using Theorem \ref{theorem:main theorem weighted isop. inequlity dim 2} (i) that
$$
 \left(\frac{|\Omega|}{\pi}\right)^{\frac{p+1}{2}}<\left(\frac{|\Omega_0|}{\pi}\right)^{\frac{p+1}{2}}\leq \frac{1}{2\pi}\int_{\delomega_0}|x|^pd\sigma
 <\frac{1}{2\pi}\intdelomega|x|^pd\sigma=\left(\frac{|\Omega|}{\pi}\right)^{\frac{p+1}{2}}
$$
which is again a contradiction.
\smallskip

\textit{Step 2.} We now assume that $0\in\delomega$ and show that this leads to a contradiction. Let $\gamma:[0,L]\to\delomega$ be as in \eqref{eq:general choice of curve} and $\gamma(0)=\gamma(L)=0.$ Without loss of generality, by rotating the domain, we know by Lemma \ref{lemma:symmetry with respect to axis} that the maximum of $|\gamma|$ has to be achieved at $t=L/2,$ that for some $d>0$
$$
  \gamma\left(\frac{L}{2}\right)=(d,0)\quad\text{ and }\quad d=\max_{t\in[0,L]}|\gamma(t)|,
$$
and that $\gamma$ is symmetic with respect to the $x_1$ axis $\{(x_1,x_2)\in\re^2|\,x_2=0\}.$ Using this symmetry and the chosen orientation of $\gamma,$ one obtains that
$$
  \gamma'\left(\frac{L}{2}\right)=(0,1),\quad\;\gamma_1''\left(\frac{L}{2}\right) \leq 0\quad\text{ and hence}\quad \kappa\left(\frac{L}{2}\right)\geq 0.
$$
Note also that $\nu(L/2)=(1,0).$ Hence we obtain for the generalized curvature at $L/2$ that
\begin{equation}\label{eq:proof:k bigger zero in sharp form on boundary}
  k=p|\gamma|^{p-2}\langle\gamma;\nu\rangle+|\gamma|^p\kappa=pd^{p-2}d+d^p\kappa\geq pd^{p-1}>0.
\end{equation}

\textit{Step 3.} Using again Lemma \ref{lemma:symmetry with respect to axis} we obtain  that $\gamma_1(-t)=\gamma_1(t)$ and therefore $\gamma_1'(0)=0$ and $\gamma_2'(0)=-1.$ Therefore $\gamma_2$ is invertible near zero and $f=\gamma_1\circ\gamma_2^{-1}\in C^2((-\epsilon,\epsilon))$ for some $\epsilon>0.$ We use the new parametrization $\alpha$ of $\delomega$ near $0,$ given by
$t\mapsto\alpha(t)=(f(t),t),$ $t\in(-\epsilon,\epsilon).$
This new parametrization allows us to reduce the problem to a $1$-dimensional one, analyzing the function $f.$
Since $\alpha'=(f',1),$ $\alpha''=(f'',0)$ and keeping the reversed orientation in mind, we have the following formulas for the outer normal $\nu$ and curvature $\kappa$ 
$$
  \nu=\frac{(-1,f')}{\sqrt{1+f'^2}},\quad \kappa=-\frac{\alpha_1'\alpha_2''-\alpha_2'\alpha_1''}{|\alpha'|^3}=
  \frac{f''}{(1+f'^2)^{3/2}}\quad\text{ near }t=0.
$$
The generalized curvature $k=p|\alpha|^{p-2}\langle\alpha,\nu\rangle+|\alpha|^p\kappa$ near $t=0$ is
\begin{equation}
 \label{eq:proof:gen curvature for f par}
  k=\frac{p|\alpha(t)|^p}{\sqrt{1+f'(t)^2}}\frac{tf'(t)-f(t)}{|\alpha(t)|^2}+|\alpha(t)|^p\frac{f''(t)}{(1+f'(t)^2)^{3/2}}=m(t),
\end{equation}
where $m(t)$ is just an abbreviation for the right hand side.
\smallskip

\textit{Step 4.} We will show that
\begin{equation}
 \label{eq:proof:lim m of t is 0}
  \lim_{t\to 0}m(t)=0.
\end{equation}
This will be a contradiction to \eqref{eq:proof:k bigger zero in sharp form on boundary} and the fact that $k$ has to be constant, by Lemma \ref{lemma:variational equation p-2 and p}.
Using the facts that $p>0,$
$$
  \lim_{t\to 0}|\alpha(t)|=0,\quad \lim_{t\to 0}\sqrt{1+f'^2}=1,\quad\text{ and }\quad f\in C^2,
$$
to prove \eqref{eq:proof:lim m of t is 0}, it suffices to show that
\begin{equation}\label{eq:proof:tru for any f in C2}
  \frac{|tf'(t)-f(t)|}{|\alpha(t)|^2}=\frac{|tf'(t)-f(t)|}{(t^2+f(t)^2)}\quad\text{ is bounded on $(-\epsilon,\epsilon)$}.
\end{equation}
\eqref{eq:proof:tru for any f in C2} is true for any $f\in C^2([-\epsilon,\epsilon])$ with $f(0)=0.$ This can be seen in the following way. Since $f(0)=0,$ one has $\lim_{t\to 0}f(t)/t=f'(0)$ and
\begin{align*}
  \limsup_{t\to 0}\frac{|tf'(t)-f(t)|}{|t^2+f(t)^2|}=&\lim_{t\to 0}\frac{1}{|1+\frac{f(t)^2}{t^2}|}\limsup_{t\to 0}\left|\frac{t f'(t)-f(t)}{t^2}\right|
  \smallskip \\
  =&\frac{1}{1+f'(0)^2}\limsup_{t\to 0}\left|\frac{t f'(t)-f(t)}{t^2}\right|.
\end{align*}
It follows from de  l' Hopital rule that 
$$
  \lim_{t\to 0}\frac{t f'(t)-f(t)}{t^2}=\frac{f''(0)}{2},
$$
which proves that \eqref{eq:proof:tru for any f in C2} holds and the claim of Step 4.
\end{proof}

\section{Some results in general dimensions $n\geq 3$}
\label{general dimension}
In this section we shall use the following notations and abbreviations: let $n\in\mathbb{N}$
\begin{equation}
 \begin{split}
  \label{notations B S LL etc.}
  B_{\delta}^n(y)=&\left\{x\in\re^n|\, |x-y|<\delta\right\},\quad B_{\delta}^n=B_{\delta}^n(0),\quad \alpha_n=\LL^{n}(B_1^n) 
  \smallskip \\
  S_{\delta}^{n-1}=&\partial B_{\delta}^n= \left\{x\in\re^n|\, |x|=\delta\right\},\quad \omega_{n-1}=\HH^{n-1}(S^{n-1}_1).
\end{split}
\end{equation}
Recall that $\alpha_n=\omega_{n-1}/n.$ If the dimension is obvious we omit $n$ in the superscript e.g. $B_{\delta}=B_{\delta}^n$ and the same for $S_{\delta}^{n}.$ We also set $|\Omega|:=\LL^n(\Omega).$ The following is the main theorem, summarizing the results of \cite{Alvino Brock Mercaldo etc.}, \cite{Betta and alt.} and the present paper.

\begin{theorem}
\label{theorem:main if n bigger 3}
Let $n\geq 3$ and $\Omega\subset\re^n$ be a bounded open Lipschitz set. Regarding the inequality
\begin{equation}
 \label{eq:theorem:main isop. ineq. n bigger 3.}
     \left(\frac{n|\Omega|}{\omega_{n-1}}\right)^{\frac{n+p-1}{n}}\leq \frac{1}{\omega_{n-1}}\intdelomega|x|^pd\mathcal{H}^{n-1}.
\end{equation}
the following statements hold true:
\smallskip

(i) If $p\geq 0$ then \eqref{eq:theorem:main isop. ineq. n bigger 3.} holds true for all $\Omega.$
\smallskip 

(ii) If $-n+1<p<0,$ then we have for any $C>0$
$$
    \inf\left\{\int_{\delomega}|x|^p d\mathcal{H}^{n-1}:\,0\in\Omega\subset\re^n\text{ connected and }|\Omega|=C\right\}=0,
$$
where the infimum is taken over all bounded open smooth sets. In particular \eqref{eq:theorem:main isop. ineq. n bigger 3.} cannot hold for all $\Omega$ containing the origin.
\smallskip

(iii) If $p\leq -n+1,$ then \eqref{eq:theorem:main isop. ineq. n bigger 3.} holds true for all $\Omega$ containing the origin.
\smallskip

(iv) If there is equality in case (i) or (iii), then $\Omega$ must be a ball, centered at the origin if $p\neq 0.$
\end{theorem}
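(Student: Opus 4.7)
The plan is to treat the four parts separately, since they require essentially different techniques. Part (i) together with the equality case of (i) in (iv) are taken from \cite{Alvino Brock Mercaldo etc.} via the interpolation argument that will be reproduced in Proposition \ref{proposition:result of Alvino Brock et alt.} for starshaped domains, combined with a standard weighted symmetric decreasing rearrangement for the general case. So the new work is concentrated on parts (ii), (iii), and the equality case of (iii).

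For part (iii) I would first reduce to the starshaped case. Setting $R(\xi)=\sup\{r\geq 0:\,r\xi\in\Omega\}$ and $\Omega^\ast=\{r\xi:\,0\leq r<R(\xi)\}$, we have $\Omega\subseteq\Omega^\ast$ because $0\in\Omega$, and a coarea computation for the radial projection $\delomega\to S^{n-1}$ yields $|\Omega^\ast|\geq|\Omega|$ together with $\int_{\delomega^\ast}|x|^p\dH\leq\int_{\delomega}|x|^p\dH$. For the starshaped $\Omega^\ast$ parametrized as $R(\xi)\xi$, the surface element $R^{n-2}\sqrt{R^2+|\nabla_{S^{n-1}}R|^2}\,d\xi$ is at least $R^{n-1}d\xi$, giving
\[
\int_{\delomega^\ast}|x|^p\dH\geq\int_{S^{n-1}}R(\xi)^{n+p-1}\,d\xi.
\]
Since the exponent $a:=(n+p-1)/n$ is nonpositive under the assumption $p\leq -n+1$, the function $t\mapsto t^a$ is convex on $(0,\infty)$, so Jensen's inequality applied to $R^n$ together with $|\Omega^\ast|=n^{-1}\int_{S^{n-1}}R^n\,d\xi$ produces exactly \eqref{eq:theorem:main isop. ineq. n bigger 3.}. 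For the equality statement in (iv): when $p<-n+1$, Jensen forces $R$ constant, the gradient-dropping forces $\nabla_{S^{n-1}}R\equiv 0$, and the starshapification step forces $\Omega=\Omega^\ast$; when $p=-n+1$ the exponent $a=0$ makes Jensen vacuous, but the gradient-dropping step alone is still enough to force $\Omega$ to be a ball centered at the origin. This borderline treatment is the ``slight difference'' with \cite{Betta and alt.} mentioned in the introduction.

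For part (ii) I would exhibit an explicit sequence. Take $\Omega_R$ to be the union of a small ball $B_{\delta}(0)$, a bulk ball $B_{\rho_R}(Re_1)$ with $\rho_R$ chosen so $|\Omega_R|=C$, and a thin cylindrical tube of radius $\delta$ joining them; $\Omega_R$ is connected and contains $0$. Along $\delta=R^{-1}$ and $R\to\infty$ the three contributions to the weighted perimeter all vanish: the small ball contributes $\omega_{n-1}\delta^{n+p-1}\to 0$, precisely because $p>-n+1$; the bulk ball contributes on the order of $R^p\rho_R^{n-1}\to 0$, because $p<0$ and $\rho_R$ stays bounded; and the tube contribution $\omega_{n-2}\delta^{n-2}\int_0^R(t^2+\delta^2)^{p/2}dt$, split at $t=\delta$, is bounded by a sum of terms of the type $\delta^{n+p-1}$ and $\delta^{n-2}R^{p+1}$ (with a logarithmic factor when $p=-1$), all of which vanish thanks to $n\geq 3$.

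The main obstacle I expect is the justification in part (iii) that radial starshapification does not increase the weighted perimeter when $\delomega$ fails to be a graph over $S^{n-1}$ and several ``sheets'' cover the same direction $\xi$; this requires a careful coarea-type argument for the radial projection on $\delomega$. Once that monotonicity is in place, the rest of (iii) is the one-line Jensen inequality above plus the case analysis for equality, and the explicit construction in (ii) reduces to the elementary bookkeeping sketched above.
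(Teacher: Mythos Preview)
Your treatment of parts (i), (ii), and the equality case for (i) matches the paper, and your tube-and-ball construction in (ii) is essentially the paper's construction with a different choice of parameters ($\delta=R^{-1}$ and length $R$ instead of radius $\epsilon^{2}$ and length $\epsilon^{-n+1}$); the bookkeeping goes through for the same reason, namely $n+p-1>0$ and $n\geq 3$.

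There is, however, a genuine gap in your argument for (iii). You take the \emph{outer} radius $R(\xi)=\sup\{r:r\xi\in\Omega\}$, obtain $\Omega\subset\Omega^{\ast}$ and hence $|\Omega^{\ast}|\geq|\Omega|$, and then prove via Jensen that
\[
\frac{1}{\omega_{n-1}}\int_{\partial\Omega}|x|^{p}\,d\mathcal{H}^{n-1}\;\geq\;\left(\frac{n|\Omega^{\ast}|}{\omega_{n-1}}\right)^{(n+p-1)/n}.
\]
But the exponent $(n+p-1)/n$ is \emph{nonpositive} for $p\leq -n+1$, so $|\Omega^{\ast}|\geq|\Omega|$ gives $\bigl(n|\Omega^{\ast}|/\omega_{n-1}\bigr)^{(n+p-1)/n}\leq\bigl(n|\Omega|/\omega_{n-1}\bigr)^{(n+p-1)/n}$, which is the wrong direction: you cannot recover the inequality for $\Omega$ from the one for $\Omega^{\ast}$. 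A concrete example such as $\Omega=B_{\epsilon}(0)\cup\bigl(B_{2}(0)\setminus\overline{B_{1}(0)}\bigr)$ shows that your chain only yields $\mathrm{RHS}(\Omega)\geq\mathrm{LHS}(\Omega^{\ast})$ together with $\mathrm{LHS}(\Omega)\geq\mathrm{LHS}(\Omega^{\ast})$, from which nothing follows.

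The fix, and this is exactly what the paper does, is to pass instead to the \emph{inner} starshaped piece: set $r(\xi)$ to be the first exit radius (the $r_{i,2}$ in the paper's sheet decomposition). Then the inner starshaped set has volume at most $|\Omega|$, and one uses that $t\mapsto t^{(n+p-1)/n}$ is both \emph{non-increasing} (to pass from $|\Omega|$ to the inner volume) and convex (Jensen). You used only convexity; the monotonicity is what closes the gap, and it is equally automatic for $p\leq -n+1$. The perimeter comparison survives because every sheet contributes nonnegatively, just as you argued. Finally, for the equality case in (iii) the paper takes a different route from yours: it subtracts the constant $a(M)=M^{p}$ from the density to reduce to the \emph{classical} isoperimetric equality case, which handles the borderline $p=-n+1$ without needing the gradient-dropping step to carry the whole burden.
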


\begin{remark}
In (ii), if one omits the condition that $\Omega$ has to contain the origin, then it is trivial that the infimum is zero, by taking any domain with constant volume and shifting it to infinity. This is true in any dimension, in particular also if $n=2,$ compare with Theorem \ref{theorem:main theorem weighted isop. inequlity dim 2} (ii).
\end{remark}

\begin{proof}
Part (i) and its sharp form (iv) have been proven in \cite{Alvino Brock Mercaldo etc.} and \cite{Betta and alt.}, respectively the classical isoperimetric inequality. See also Proposition \ref{proposition:result of Alvino Brock et alt.} for the case $0<p<1$ and for starshaped domains. Part (ii) will be proven at the end of this section. Part (iii) and its sharp form (iv) will follow from Theorem \ref{theorem:p leq -n+1}, by setting $a(t)=t^p.$
\end{proof}
\smallskip

Although our results will not need the variational method, it should be mentioned, since it gives immediately some results on the nonvalidity of \eqref{eq:main isop. ineq. intro.} for the range $-n+2<p<0.$
It can be easily seen with this method that if one takes a ball centered at the origin and moves it slightly away in any direction, then the weighted perimeter decreases. It actually continues to decrease even more as one moves it further and further away. Moreover, for the range $-n+1<p<-n+2,$ Part (ii) of Theorem \ref{theorem:main if n bigger 3} gives an interesting example for a case when balls centered at the origin are local but not global minimizers of the weighted perimeter.  We shall illustrate first the variational method with an example.

\begin{example}
Fix $r>0$ and let $\Omega_s=B_r(0)+(s,0),$ where we will understand from now on $(s,0)=(s,0,\ldots,0)\in\re^n$ and $s\in \re.$ Obviously $|\Omega_s|=\alpha_nr^n$ for all $s$ and $\Omega_s$ contains the origin for all $s<r.$ We shall denote the weighted perimeter as
$$
  P\left(\Omega_s\right)=\int_{\delomega_s}|x|^pd\HH^{n-1}.
$$
Assume that $F=(F_1,F_2,\ldots,F_n)=(F_1,\tilde{F}):U\subset\re^{n-1}\to \partial B_r(0)$ is a parametrization of $\partial B_r(0)$ up to a set of $\HH^{n-1}$ measure $0.$ Let $g(u)du$ denote the surface element of $\partial B_r(0)$ in this parametrization, with $u\in U.$ Then $F+(s,0)=(F_1+s,\tilde{F})$ is a parametrization of $\partial \Omega_s$ and one obtains that
\begin{equation}
 \label{eq:first variation for general s}
  \frac{d}{ds}P(\Omega_s)=p\int_U\left(\left(F_1(u)+s\right))^2+|\tilde{F}(u)|^2\right)^{\frac{p}{2}-1}(F_1(u)+s)g(u)du.
\end{equation}
This gives
\begin{equation}
 \label{eq:example:ball is critical point}
  \frac{d}{ds}P(\Omega_s)\Big|_{s=0}=p\int_{\partial B_r}|x|^{p-2}x_1\dH=0\quad\text{ for any }p\in\re.
\end{equation}
Deriving \eqref{eq:first variation for general s} once more leads to
$$
  \frac{d^2}{ds^2}P\left({\Omega_s}\right)\Big|_{s=0}=p\left((p-2)r^{p-4}\int_{\partial B_r}x_1^2\dH+\HH^{n-1}(\partial B_r) r^{p-2}\right)
$$
Using that $\int_{\partial B_r}x_1^2=1/n\,\int_{\partial B_r}(x_1^2+\cdots+x_n^2)=r^2/n\, \HH^{n-1}(\partial B_r)$ the second variation simplifies to
$$
  \frac{d^2}{ds^2}P\left({\Omega_s}\right)\Big|_{s=0}=\frac{r^{p-2}}{n} \omega_{n-1}r^{n-1}\,p(p-2+n).
$$
One can now easily verify that if $n\geq 3$
\begin{equation}
 \label{eq:example:p values second variation}
  p(p-2+n)\text{ is }\left\{\begin{array}{rl}
                   >0 &\text{ if }-\infty<p<-n+2 
                   \\
                   =0 &\text{ if }p=-n+2 
                   \\
                   <0 &\text{ if }-n+2<p<0
                   \\
                   =0 &\text{ if }p=0
                   \\
                   >0 &\text{ if }p>0.
                  \end{array}\right.
\end{equation}
In view of \eqref{eq:example:ball is critical point} and the third inequality in \eqref{eq:example:p values second variation}, one obtains that $P(\Omega_s)$ decreases for increasing $s$ near the origin. 
This shows that \eqref{eq:theorem:main isop. ineq. n bigger 3.} cannot hold true if $-n+2<p<0.$
\end{example}

The equations \eqref{eq:example:p values second variation} remain true for much more general variations. We will not use them but nevertheless summarize the result in the present case. 
For the methods and proofs we refer to \cite{Rosales Canete Bayle Morgan}, \cite{do Carmo Barbosa} and \cite{Morgan Vari Formulae}. Let $\varphi_s:\re^n\to \re^n$ be a diffeomorphism for small $s.$ Define $\Omega_s=\varphi_s(B_r(0))$ and assume that the variaton is normal:
$\partial\varphi_s/\partial s=u$ is normal to $\delomega$ at $s=0.$ Moreover we assume that the variaton is such that it is volume preserving:
 $|\Omega_s|=|\Omega|$ for all $s.$ This implies, calculating the first variation of the volume
\begin{equation}\label{eq:volume preserving variation}
  \int_{\partial B_r}u\dH=0.
\end{equation}
Assume we are given a radial perimeter density $G:(0,\infty)\to (0,\infty),$ smooth in a neighborhood of $r,$ and $P_G$ is defined by
$$
  P_{G}(\Omega)=\int_{\delomega}G(|x|)\dH.
$$
Then the second variation for $P_G$ is given by
\begin{displaymath}
 \begin{split}
  &\frac{d^2}{ds^2}P_G(\Omega_s)\Big|_{s=0}=
  M_1+M_2\, \quad\text{ where }\quad
  \smallskip 
  \\
  &M_1=G(r)\left(\int_{\partial B_r}|\nabla_{\sum}u|^2-\frac{n-1}{r^2}\int_{\partial B_r}u^2\right),
  \quad M_2=\left((n-1)\frac{G'(r)}{r}+G''(r)\right)\int_{\partial B_r}u^2
\end{split}
\end{displaymath}
where $\nabla_{\sum}u$ is the covariant derivative along $\partial B_r$. By the standard isoperimetric inequality one has that $M_1\geq 0$ for any $u$ satisfying \eqref{eq:volume preserving variation}. This follows again by variational methods (taking $G=1$), or equivalently, it is the Poincar\'e inequality on the sphere with optimal constant, see \cite{do Carmo Barbosa} Example (2.13). Moreover one can show by the first variation of $P_G$ that balls centered at the origin are always critical points of $P_G$ under the volume constraint. So this implies that $B_r$ can be a minimizer of $P_G$ if and only if $M_2\geq 0,$ i.e.
$$
  (n-1)\frac{G'(r)}{r}+G''(r)\geq 0
$$  
In the present case $G(r)=r^p,$ so the last inequality holds true if and only if $p(n+p-2)\geq 0.$ This is precisely again \eqref{eq:example:p values second variation}. Note that if $-n+1<p<-n+2,$ then $p(n+p-2)>0,$
 but nevertheless Theorem \ref{theorem:main if n bigger 3} (ii) shows that balls centered at the origin are not global minimizers.
\smallskip

We shall now prove Part (iii) of Theorem \ref{theorem:main if n bigger 3}.
However this can be done for much more general densities and we state the result in that form. It follows the same idea as in the proof of the corresponding result when $p\geq 1$ in \cite{Betta and alt.}.

\begin{theorem} 
\label{theorem:p leq -n+1}
Let $n\geq 2$ and $a:(0,\infty)\to [0,\infty)$ be a continuous function such that
\begin{equation}
 \label{eq:theorem:decreasing and convex}
  t\mapsto a\left(t^{\frac{1}{n}}\right) t^{\frac{n-1}{n}}\quad\text {is non-increasing and convex}
\end{equation}
Then any bounded open Lipschitz set $\Omega\subset\re^n$ containing the origin satisfies
\begin{equation}\label{eq:theorem:a convex decreasing main ineq}
  \int_{\partial B_R}a(|x|)d\mathcal{H}^{n-1}(x)\leq \intdelomega a(|x|)d\mathcal{H}^{n-1}(x),
\end{equation}
where $B_R$ is the ball of radius $R$ centered at the origin and with the same volume as $\Omega.$ If there is equality in \eqref{eq:theorem:a convex decreasing main ineq} and $a(t)>0$ for all $t\in(0,\infty),$ then $\Omega$ must be a ball centered at the origin.
\end{theorem}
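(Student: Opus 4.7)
The plan is to reduce \eqref{eq:theorem:a convex decreasing main ineq} to a one-dimensional Jensen-type estimate by passing to polar coordinates, following the method of \cite{Betta and alt.} used there for $p\geq 1$. The hypothesis \eqref{eq:theorem:decreasing and convex} is tailor-made so that, after the substitution $s=r^n$, both the multi-crossing geometry of $\partial\Omega$ and the averaging over $S^{n-1}$ are handled by one and the same one-dimensional function.

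First, set $A(s)=a(s^{1/n})s^{(n-1)/n}$; by hypothesis $A:(0,\infty)\to[0,\infty)$ is non-increasing and convex, and $a(r)r^{n-1}=A(r^n)$. Approximating $\Omega$ by smooth sets, for almost every direction $\omega\in S^{n-1}$ the ray $\{t\omega:t>0\}$ meets $\partial\Omega$ transversally in an odd number $2m(\omega)+1$ of points $r_1(\omega)<r_2(\omega)<\cdots<r_{2m+1}(\omega)$; the count is odd because $0\in\Omega$ and $\Omega$ is bounded. Define $\rho(\omega)^n:=r_1^n+(r_3^n-r_2^n)+\cdots+(r_{2m+1}^n-r_{2m}^n)$, so that in polar coordinates $|\Omega|=\tfrac{1}{n}\int_{S^{n-1}}\rho(\omega)^n\,d\omega$; consequently the ball $B_R$ of equal volume satisfies $R^n=\tfrac{1}{\omega_{n-1}}\int_{S^{n-1}}\rho(\omega)^n\,d\omega$.

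Second, writing each piece of $\partial\Omega$ as a polar graph $r=r_i(\omega)$, its area element is $r_i^{n-1}\sqrt{1+r_i^{-2}|\nabla_\omega r_i|^2}\,d\omega\geq r_i^{n-1}\,d\omega$, which gives the contraction estimate $\int_{\partial\Omega}a(|x|)\,d\mathcal{H}^{n-1}\geq\int_{S^{n-1}}\sum_{i=1}^{2m(\omega)+1}A(r_i(\omega)^n)\,d\omega.$ Because each increment $r_{2k+1}^n-r_{2k}^n\geq 0$, we have $\rho(\omega)^n\geq r_1(\omega)^n$, so monotonicity and non-negativity of $A$ yield the pointwise bound $A(\rho^n)\leq A(r_1^n)\leq\sum_i A(r_i^n)$. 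Jensen's inequality for the convex $A$ then gives $A(R^n)\leq\tfrac{1}{\omega_{n-1}}\int_{S^{n-1}}A(\rho^n)\,d\omega$. Multiplying by $\omega_{n-1}$ and chaining these three estimates produces $\omega_{n-1}A(R^n)\leq\int_{\partial\Omega}a(|x|)\,d\mathcal{H}^{n-1}$, which, via $\omega_{n-1}A(R^n)=\int_{\partial B_R}a(|x|)\,d\mathcal{H}^{n-1}$, is exactly \eqref{eq:theorem:a convex decreasing main ineq}.

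For the equality case, where $a>0$ on $(0,\infty)$ and hence $A>0$: equality in the pointwise bound forces $m(\omega)=0$ for a.e.\ $\omega$ (otherwise the sum carries strictly positive extra terms), so $\Omega$ is starshaped with respect to the origin; equality in the area-contraction step then forces $\nabla_\omega r_1\equiv 0$, and connectedness of $S^{n-1}$ makes $r_1$ constant, so $\Omega=B_R$. The main technical obstacle is the polar bookkeeping itself, namely justifying the transversal odd-crossing count for almost every ray, which is handled by the smooth approximation together with Sard's theorem; after that point the argument is genuinely one-dimensional, with monotonicity of $A$ absorbing the multi-crossing complication and convexity of $A$ supplying the Jensen averaging.
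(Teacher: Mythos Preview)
Your argument for the inequality is essentially the paper's own proof: both pass to polar coordinates, bound the surface element of each radial sheet from below by $r_i^{n-1}\,d\omega$, drop all but the innermost crossing, and then combine the monotonicity and convexity of $A(s)=a(s^{1/n})s^{(n-1)/n}$ via Jensen. The only cosmetic difference is that you package the volume exactly through $\rho(\omega)^n$ and then use $\rho^n\geq r_1^n$, whereas the paper discards annular shells first (getting $|\Omega|\geq\tfrac{1}{n}\int r_1^n$) and applies monotonicity to that; the same two ingredients are used in a different order.

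Where you genuinely diverge is the equality case. You argue directly from the chain: positivity of $A$ forces $m(\omega)=0$ a.e.\ (hence starshapedness), and then equality in the area-contraction step forces $\nabla_\omega r_1\equiv 0$. The paper instead replaces $a$ by $\tilde a(t)=a(t)-a(M)$ (with $M=\sup_{\partial\Omega}|x|$), observes that $\tilde a$ again satisfies the hypothesis, and applies the already-proved inequality to conclude $\mathcal{H}^{n-1}(\partial\Omega)\leq\mathcal{H}^{n-1}(\partial B_R)$; the classical isoperimetric equality case then gives that $\Omega$ is a ball, and starshapedness about $0$ is automatic since $0\in\Omega$. Your route is more direct but depends on the polar bookkeeping (transversal crossings, Lipschitz radial graphs) being valid for the original Lipschitz $\Omega$ rather than only for a smooth approximant---equality does not obviously pass through an approximation. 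The paper's $\tilde a$-trick sidesteps this entirely by reducing to the unweighted isoperimetric equality, which is already known at the Lipschitz level; that is what it buys.
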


\begin{remark}
\label{remark:a itself non increasing}
The hypothesis of the theorem implies that $a$ itself has to be non-increasing.
\end{remark}

\begin{figure}[h]
\begin{center}
\label{figure Omega i and r ij}
\includegraphics[scale=0.45]{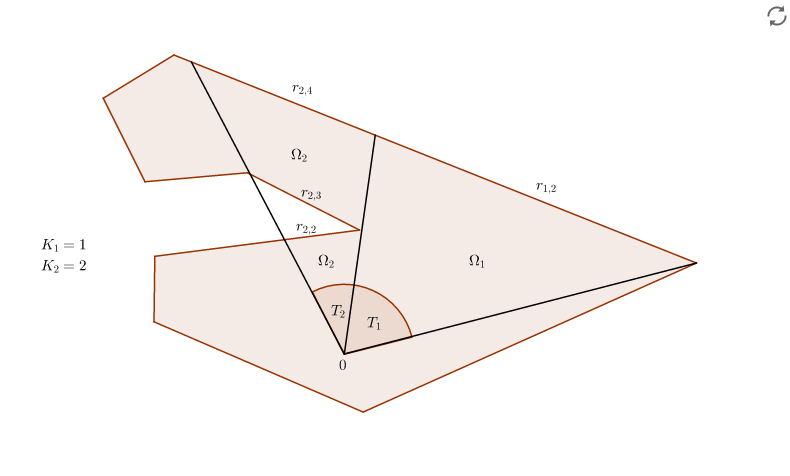}
\end{center}
\caption{Construction of  $\Omega_i$ and $r_{i,j}$}
\end{figure}

\begin{proof}
\textit{Step 1.}
Let $Q=(0,\pi)^{n-2}\times (0,2\pi)$ and $H:Q\to S^{n-1}$ denote the hypershperical coordinates and $g(\varphi)d\varphi$ the surface element of $S^{n-1}$ in these coordinates, see Appendix, Section \ref{Appendix}. By an approximation argument, using that $0\notin\delomega$ and $a$ continuous, we can assume that $\Omega$ is of the following form (this is exactly the same as in \cite{Betta and alt.} proof of Theorem 2.1, Step 3): there exists $\Omega_i,$ $i=1,\ldots,l$ such that $\Omegabar=\bigcup_{i=1}^l\Omega_i$,
and each $\Omega_i$ is given in the following way 
(see Figure 1 giving an idea how to construct $\Omega_i$). We assume that there exists disjoint open subsets $T_i\subset Q,$ such that
$$
  \overline{Q}=\bigcup_{i=1}^l\overline{T}_i\,,
$$
and for each $i=1,\ldots,l$ there exists $K_i\in\mathbb{N}$ and functions $r_{i,s}$ for $s=1,\ldots,2K_i$
$$
  r_{i,s}\in C^1\left(\overline{T}_i\right),\quad 0=r_{i,1}<r_{i,2}< \ldots <r_{i,2K_i},
$$
such that
$$
  \Omega_i=\left\{rH(\varphi)|\;\varphi\in \overline{T}_i\,,\quad r_{i,2s-1}(\varphi)\leq r\leq r_{i,2s}(\varphi)\quad\text{ for }s=1,\ldots,K_i\right\}.
$$
We can assume that $r_{i,1}=0$ because $0\in\Omega.$ Let $G$ denote the polar coordinates $G(t,\varphi)=tH(\varphi)$ (see Appendix) whose Jacobian determinant is given by $t^{n-1}g(\varphi).$ Therefore we obtain that 
\begin{align*}
 |\Omega_i|=&\int_{T_i}\sum_{s=1}^{K_i}\int_{r_{i,2s-1}(\varphi)}^{r_{i,2s}(\varphi)}t^{n-1}g(\varphi)dt\,d\varphi 
 =\int_{T_i}\sum_{s=1}^{K_i}\frac{1}{n}\left(r_{i,2s}^n-r_{i,2s-1}^n\right)g(\varphi)d\varphi 
 \smallskip \\
 \geq & \int_{T_i}\frac{1}{n}r_{i,2}^n(\varphi)g(\varphi)d\varphi.
\end{align*}

Let us define a function $r:Q\to\re,$ $\mathcal{H}^{n-1}$ almost eveywhere in $Q,$ by
$$
  r:\bigcup_{i=1}^l T_i\to \re,\quad r(\varphi)=r_{i,2}(\varphi)\quad\text{if }\varphi\in T_i\,.
$$
For $p\in S^{n-1}$ we set $\tilde{r}(p)=r(H^{-1}(p)),$ and thus $\tilde{r}(H(\varphi))=r(\varphi)$ almost everywhere in $Q.$ In this way we obtain that
\begin{equation}
 \label{eq:estimate for vol Omega with tilde r}
  |\Omega|=\sum_{i=1}^l|\Omega_i|\geq \frac{1}{n}\int_{Q}r(\varphi)^ng(\varphi)d\varphi = \frac{1}{n}\int_{S^{n-1}}\tilde{r}^nd\mathcal{H}^{n-1}.
\end{equation}

\textit{Step 2.} We now estimate the weighted perimeter. For $i=1,\ldots,l$ us define $\Gamma_i$ by
$$ 
  \Gamma_i=\bigcup_{s=2}^{2K_i}\left\{r_{i,s}(\varphi)H(\varphi)|\; \varphi\in \overline{T}_i\right\}.
$$
In this way $\bigcup_{i=1}^l\Gamma_i\subset\delomega$ and we obtain that
$$
  \intdelomega a(|x|)d\mathcal{H}^{n-1}(x)\geq\sum_{i=1}^l\int_{\Gamma_i} a(|x|) d\mathcal{H}^{n-1}(x)=
  \sum_{i=1}^l\sum_{s=2}^{2K_i}\int_{T_i}a(r_{i,s}(\varphi))\tilde{g}_{i,s} (\varphi)d\varphi,
$$
where $\tilde{g}_{i,s}$ is given by
\begin{equation}\label{eq:proof:gtilde i s definition}
  \tilde{g}_{i,s}(\varphi)=\sqrt{\det\left(\left\langle\frac{\partial F_{i,s}}{\partial \varphi_j},\frac{\partial F_{i,s}}{\partial \varphi_k}\right\rangle\right)_{j,k=1,\ldots, n-1}} 
\end{equation}
with the parametrizations $F_{i,s}(\varphi)=r_{i,s}(\varphi)H(\varphi).$ Using Lemma \ref{lemma:parametrization r H} and the fact that $a\geq 0,$ we obtain that 
\begin{equation}\label{eq:proof:inequality by Lemma 11 for tilde g}
  a\,\tilde{g}_{i,s}\geq a\, r_{i,s}^{n-1} g \quad\text{ for all $i,s.$}
\end{equation}
It follows that
$$
  \intdelomega a(|x|) d\mathcal{H}^{n-1}(x)
  \geq 
  \sum_{i=1}^l\sum_{s=2}^{2K_i}\int_{T_i}a(r_{i,s}(\varphi)) r_{i,s}^{n-1}(\varphi) g (\varphi)d\varphi
  \geq
  \sum_{i=1}^l\int_{T_i}a(r_{i,2}(\varphi)) r_{i,2}^{n-1}(\varphi) g (\varphi)d\varphi
$$
Using again the definition of $r$ and $\tilde{r}$ introduced at the end of Step 1 we obtain 
\begin{equation}
 \label{eq:proof:estimate of y to p with r tilde}
  \intdelomega a(|x|)d\mathcal{H}^{n-1}(x)
  \geq \int_{S^{n-1}}a(\tilde{r})\tilde{r}^{n-1}d\mathcal{H}^{n-1}.
\end{equation}

\textit{Step 3.} 
Let us define $h$ by
$$
  h(t)=a\left(t^{\frac{1}{n}}\right) t^{\frac{n-1}{n}}.
$$
It now follows from \eqref{eq:estimate for vol Omega with tilde r}, from the fact that $h$ is decreasing, from Jensen inequality and finally 
\eqref{eq:proof:estimate of y to p with r tilde}, that
\begin{equation}
\label{eq:proof:h ans omega n-1 in Step 3}
\begin{split}
  h\left(\frac{n|\Omega|}{\omega_{n-1}}\right)\leq & h
   \left(\frac{1}{\omega_{n-1}}\int_{S^{n-1}}\tilde{r}^nd\mathcal{H}^{n-1}\right)
  \leq 
  \frac{1}{\omega_{n-1}}\int_{S^{n-1}}h\left(\tilde{r}^n\right)d\mathcal{H}^{n-1}
  \smallskip \\
  =&\frac{1}{\omega_{n-1}}\int_{S^{n-1}}a(\tilde{r})\tilde{r}^{n-1} d\mathcal{H}^{n-1}\leq \frac{1}{\omega_{n-1}}\intdelomega a(|x|)d\mathcal{H}^{n-1}(x).
 \end{split}
\end{equation}
Since $|\Omega|=|B_R|$ one easily verifies that 
\begin{equation}
 \label{eq:proof:R and Omega volume connection}
  R=\left(\frac{n|\Omega|}{\omega_{n-1}}\right)^{\frac{1}{n}}\quad\text{ and }\quad h\left(\frac{n|\Omega|}{\omega_{n-1}}\right)=a(R)R^{n-1}=\frac{1}{\omega_{n-1}}\int_{\partial B_R}a(|x|)d\mathcal{H}^{n-1}(x).
\end{equation}
Plugging this identity into  \eqref{eq:proof:h ans omega n-1 in Step 3} concludes the proof inequality \eqref{eq:theorem:a convex decreasing main ineq}.
\smallskip

\textit{Step 4.}
We now deal with the case of equality in \eqref{eq:theorem:a convex decreasing main ineq}, first with the additional assumption that $\Omega$ is a $C^1$ set starshaped with respect to the origin: i.e. $\delomega$ can be parametrized (up to a set of $\mathcal{H}^{n-1}$ measure zero) as $\delomega=\{r(\varphi)H(\varphi)|\,\varphi\in Q\}.$ In other words $l=1,$ $K_1=1$ and $r=r_{1,2}\in C^1(Q).$ By hypothesis we must have equality in \eqref{eq:estimate for vol Omega with tilde r}. Let us show that $r$ is constant. If $r$ is not constant, then by Lemma \ref{lemma:parametrization r H} there exists $\varphi_0$ and  a neighborhood of $U$ in $Q$ containing $\varphi_0$ such that, recalling \eqref{eq:proof:gtilde i s definition}, 
$$
  \tilde{g}_{1,1}>r^{n-1}g\quad\text{ in }U.
$$
Thus we obtain a strict inequality in \eqref{eq:proof:estimate of y to p with r tilde} and we cannot have equality in \eqref{eq:theorem:a convex decreasing main ineq}. (Note that if we additionally assume that $h$ is strictly convex, then equality in Jensen inequality also implies that $\tilde{r}$ must be constant.) We thus obtain that $\Omega$ is a ball centered at the origin with radius $r.$

\smallskip

\textit{Step 5.} Let us treat now the general case of equality in \eqref{eq:theorem:a convex decreasing main ineq}. Since $\Omega$ is bounded there exsist $M>0$ such that 
$$
  |x|\leq M\quad \text{ for all }x\in \delomega\text{ and all }x\in \partial B_R\,.
$$
Let us define $\tilde{a}$ by
$$
  \tilde{a}(t)=\left\{\begin{array}{rl}
                    a(t)-a(M)&\text{ if }0<t\leq M 
                    \smallskip \\
                    0&\text{ if }t\geq M.
                   \end{array}\right.
$$
Note that $\tilde{a}$ is continuous and $\tilde{a}\geq 0,$ because $a$ is non-increasing (Remark \ref{remark:a itself non increasing}). Moreover $\tilde{h}$ defined by
$$
  \tilde{h}(t)=\tilde{a}\big(t^{\frac{1}{n}}\big)t^{\frac{n-1}{n}}=
  \left\{\begin{array}{rl}
  a\big(t^{\frac{1}{n}}\big)t^{\frac{n-1}{n}}-a(M)t^{\frac{n-1}{n}} & \text{if }0<t^{\frac{1}{n}}\leq M
  \smallskip \\
  0 &\text{if }t^{\frac{1}{n}}\geq M
  \end{array}\right.
$$
is non-increasing and convex, because it is the sum of two nonincreasing and convex functions. Therefore we can apply \eqref{eq:theorem:a convex decreasing main ineq} to obtain that
$$
  \intdelomega\tilde{a}(|x|)d\mathcal{H}^{n-1}(x)\geq \int_{\partial B_R}\tilde{a}(|x|)d\mathcal{H}^{n-1}(x).
$$
By the choice of $M$ and the definition of $\tilde{a}$ we have that $\tilde{a}(|x|)=a(|x|)-a(M)$ for any $x\in \delomega\cup\partial B_R$. So the inequality becomes
$$
  \intdelomega a(|x|)d\mathcal{H}^{n-1}(x)-a(M)\mathcal{H}^{n-1}(\delomega)\geq \int_{\partial B_R} a(|x|)d\mathcal{H}^{n-1}(x) -a(M)\mathcal{H}^{n-1}(\partial B_R).
$$ 
Using now the assumption that we have equality in \eqref{eq:theorem:a convex decreasing main ineq} and that $a(M)>0$, we obtain that $\mathcal{H}^{n-1}(\delomega)\leq \mathcal{H}^{n-1}(\partial B_R).$ Which implies, in view of the classical isoperimetric inequality, that $\Omega$ must be a ball. Since $0\in\Omega$ we are in the case of the assumptions of Step 4 and the result follows. 
\end{proof}

\smallskip We will now prove Part (ii) of the main theorem.
We will use the notation: if $x\in \re^n$ write $x=(x_1,x'),$ where $x'\in \re^{n-1}.$
\smallskip 

\begin{proof}[Proof of Theorem \ref{theorem:main if n bigger 3} Part (ii).] 
\textit{Step 1.}
It is sufficient to find  a sequence of sets $\Omega_{\epsilon}$ such that each $\Omega_{\epsilon}$ is Lipschitz, bounded, connected, $0\in\Omega_{\epsilon}$ and
\begin{equation}
 \label{counterexample with limit}
  \lim_{\epsilon\to 0}|\Omega_{\epsilon}|\to C>0\quad\text{ but }\quad \lim_{\epsilon\to 0} \int_{\delomega_{\epsilon}}|x|^p \dH=0.
\end{equation}
Then, by approximation, \eqref{counterexample with limit} holds also for a sequence of smooth sets. Then this sequence can be rescaled, defining a new sequence $\widehat{\Omega}_{\epsilon}=\lambda_{\epsilon}\Omega_{\epsilon}$ with $\lambda_{\epsilon}=C^{1/n}|\Omega_{\epsilon}|^{-1/n}$ so that  $|\widehat{\Omega}_{\epsilon}|$ is constant and $\widehat{\Omega}_{\epsilon}$ still satisfies the second limit in \eqref{counterexample with limit}. 

Let us now show \eqref{counterexample with limit}. Again by a rescaling argument, we can fix one $C$ and assume without loss of generality that $C=\alpha_nR^n$ for some $R>0.$
The idea is to choose $\Omega_{\epsilon}$ as a ball of radius $R$ and center going away to infinity, with a long and narrow cylinder attached to it so that it contains the origing, see Figure \ref{figura}. 
One has to choose the length and radius of the cylinder carefully. More precisely:
let us fix some $R>0$ define 
$$
  c_{\epsilon}=\sqrt{R^2-\epsilon^4}+\epsilon^{-n+1}.
$$
Then $\Omega_{\epsilon}$ shall be defined as the union of the following three sets: $\Omega_{\epsilon}=A_{\epsilon}\cup M_{\epsilon}\cup D_{\epsilon},$
\begin{align*}
 A_{\epsilon}=&\left\{ x\in B_{\epsilon^2}^n:\, x_1\leq 0\right\}
 \smallskip   \\
 M_{\epsilon}=&\left(0,\epsilon^{-n+1}\right)\times B_{\epsilon^2}^{n-1}=\left\{x\in \re^n:\, 0\leq x_1\leq \epsilon^{-n+1},\quad |x'|<\epsilon^2\right\}
 \smallskip \\
 D_{\epsilon}=&\left\{x\in \re^n:\, \left|x-\left(c_{\epsilon},0\right)\right|<R,\quad x_1\geq \epsilon^{-n+1}\right\}
\end{align*}

\begin{figure}[h]
\begin{center}
\includegraphics[scale=0.35]{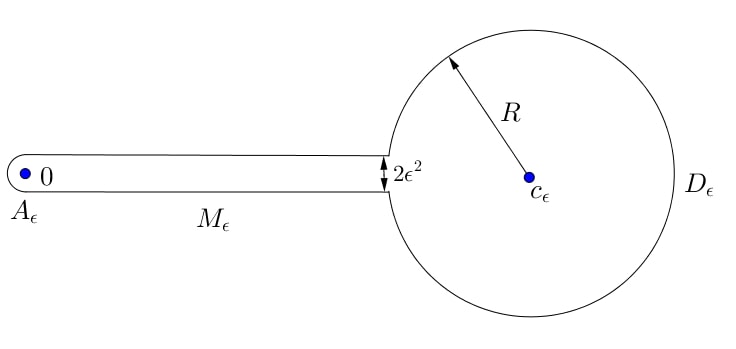}
\end{center}
\caption{the domain $\Omega_{\epsilon}$}
\label{figura}
\end{figure}

By construction $\Omega_{\epsilon}$ is a bounded connected Lipschitz set containing the origin. Note that
$$
  |A_{\epsilon}|=\frac{1}{2}\alpha_n\epsilon^{2n},\quad  |M_{\epsilon}|=\frac{\alpha_{n-1}\epsilon^{2(n-1)}}{\epsilon^{n-1}}=\alpha_{n-1}\epsilon^{n-1}.
$$
So we obtain that $\lim_{\epsilon\to 0}|\Omega_{\epsilon}|=\lim_{\epsilon\to 0}|D_{\epsilon}|=\alpha_nR^n.$
\smallskip 

\textit{Step 2.} It remains to estimate the weighted perimeter of $\delomega_{\epsilon}.$ The contribution coming from $A_{\epsilon}$ is
$$
  \frac{1}{2}\int_{\partial B_{\epsilon^2}^n}|x|^p\dH=\frac{\omega_{n-1}}{2}\epsilon^{2(n-1)+2p}\to 0\quad\text{ for }\epsilon \to 0,
$$
because $n+p-1>0.$ Note that for any $x\in \partial D_{\epsilon},$ we have the estimates, (since $p<0$), $|x|\geq \epsilon^{-n+1}$ and hence $|x|^p\leq \epsilon^{-p(n-1)}.$ This gives the estimate
$$
  \int_{\partial D_{\epsilon}}|x|^p \dH\leq \epsilon^{-p(n-1)}\omega_{n-1}R^{n-1}\to 0\quad \text{ for }\epsilon \to 0.
$$
It remains to estimate the contribution coming from $M_{\epsilon}.$ It is equal to 
$$
  \int_0^{\epsilon^{-n+1} }\int_{S_{\epsilon^2}^{n-2}}\left(x_1^2+|x'|^2\right)^{\frac{p}{2}}d\HH^{n-2}(x')dx_1=
  \omega_{n-2}\epsilon^{2(n-2)}\int_0^{\epsilon^{-n+1} }\left(x_1^2+\epsilon^4\right)^{\frac{p}{2}}dx_1
  \smallskip \\
  =
  L_1+L_2\,,
$$
where
$$
  L_1=\omega_{n-2}\epsilon^{2(n-2)}\int_0^{\epsilon^2}\left(x_1^2+\epsilon^4\right)^{\frac{p}{2}}dx_1\,\text{ and }\,
  L_2=\omega_{n-2}\epsilon^{2(n-2)}\int_{\epsilon^2}^{\epsilon^{-n+1}}\left(x_1^2+\epsilon^4\right)^{\frac{p}{2}}dx_1\,.
$$
To estimate $L_1$ we just use that $x_1^2+\epsilon^4\geq \epsilon ^4,$ and that $p<0$ to get $L_1\leq \omega_{n-2}\epsilon^{2(n+p-1)}\to 0.$ To estimate $L_2$ we use that $x_1^2+\epsilon^4\geq x_1^2$ and therefore
$$
  L_2\leq \omega_{n-2}\epsilon^{2(n-2)}\int_{\epsilon^2}^{\epsilon^{-n+1}}x_1^pdx_1\,.
$$
we distinguish 3 cases.
\smallskip 

\textit{Case 1, $p>-1$.} In this case $p+1>0$ and we obtain by explicit integration
$$
  L_2\leq \frac{\omega_{n-2}}{p+1}\epsilon^{2(n-2)}\left(\epsilon^{-(n-1)(p+1)}-\epsilon^{2(p+1)}\right)\leq \frac{\omega_{n-2}}{p+1}\epsilon^{2(n-2)-(n-1)(p+1)}.
$$
So $L_2$ tends to zero as $\epsilon\to 0$ if and only if $2(n-2)-(n-1)(p+1)>0,$ which is equivalent to
$$
  p<\frac{n-3}{n-1}.
$$
This is true, because $p<0$ and $n\geq 3.$
\smallskip

\textit{Case 2, $p=-1$.} Explicit integration as in Case 1 gives that
$$
  L_2\leq -\omega_{n-2}(n+1)\epsilon^{2(n-2)}\log(\epsilon)\to 0,
$$
because $n\geq 3.$
\smallskip

\textit{Case 3, $p<-1$.} Hence $p+1<0$ and we obtain
$$
  L_2\leq -\frac{\omega_{n-2}}{p+1}\epsilon^{2(n-2)}\left(\epsilon^{2(p+1)}-\epsilon^{-(n-1)(p+1)}\right)\leq 
  -\frac{\omega_{n-2}}{p+1}\epsilon^{2(n-2)+2(p+1)}\to 0,
$$
using again that $n+p-1>0.$
\end{proof}

\begin{remark}
Note that also Case 3 cannot occur if $n=2.$
\end{remark}

We give here a proof of Theorem \ref{theorem:main if n bigger 3} (i) in the special case of starshaped domains, since it is quite short and contains a new and interesting interpolation argument due to Alvino et alt. \cite{Alvino Brock Mercaldo etc.}. The general case follows from a more standard, but weighted, symmetrization argument, see also \cite{Alvino Brock Mercaldo etc.}.

\begin{proposition}
\label{proposition:result of Alvino Brock et alt.}
Let $n\geq 3,$ $0< p< 1,$  $\Omega$ be a bounded open domain, starshaped with respect to the origin and with defining function $R$ (see \eqref{definition:starshaped domain} in Section \ref{Appendix}) such that $R$ is Lipschitz. Then it holds that
$$
  n\alpha_n^{\frac{1-p}{n}}|\Omega|^{\frac{p+n-1}{n}}\leq \intdelomega |x|^p \dH.
$$
Moreover, if there is equality then $\Omega$ has to be a ball centered at the origin.
\end{proposition}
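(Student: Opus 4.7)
\textit{Step 1 (polar coordinates).} Since $\Omega$ is starshaped with Lipschitz defining function $R:S^{n-1}\to(0,\infty)$, parametrizing $\partial\Omega$ by $\varphi\mapsto R(\varphi)H(\varphi)$, invoking the Gram determinant computation of Lemma~\ref{lemma:parametrization r H}, and using polar integration on $\Omega$ give
\[
  |\Omega|=\frac{1}{n}\int_{S^{n-1}}R^n\,d\HH^{n-1},\qquad\int_{\partial\Omega}|x|^p\,d\HH^{n-1}=\int_{S^{n-1}}R^{n+p-2}\sqrt{R^2+|\nablas R|^2}\,d\HH^{n-1}.
\]
Using $\alpha_n=\omega_{n-1}/n$, the target inequality reduces to the scale-invariant form
\[
  \omega_{n-1}^{(1-p)/n}\Bigl(\int_{S^{n-1}}R^n\,d\HH^{n-1}\Bigr)^{(n+p-1)/n}\leq\int_{S^{n-1}}R^{n+p-2}\sqrt{R^2+|\nablas R|^2}\,d\HH^{n-1}.
\]

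\textit{Step 2 (classical isoperimetric on a radial dilation).} Set $\alpha=(n+p-1)/(n-1)$, which for $0<p<1$ and $n\geq 3$ lies in $(1,n/(n-1)]$, and let $\tilde\Omega$ be the starshaped set with defining function $\tilde R=R^\alpha$. Applying Step~1's polar formulas to $\tilde R$ and using $\alpha(n-1)-1=n+p-2$, the classical isoperimetric inequality $n\alpha_n^{1/n}|\tilde\Omega|^{(n-1)/n}\leq \HH^{n-1}(\partial\tilde\Omega)$ becomes
\[
  \omega_{n-1}^{1/n}\Bigl(\int R^{n\alpha}\,d\HH^{n-1}\Bigr)^{(n-1)/n}\leq\int R^{n+p-2}\sqrt{R^2+\alpha^2|\nablas R|^2}\,d\HH^{n-1}.
\]
Jensen's inequality for the convex map $t\mapsto t^\alpha$ (since $\alpha>1$) gives $\int R^{n\alpha}\geq\omega_{n-1}^{1-\alpha}(\int R^n)^\alpha$, and inserting this on the left together with the exponent identity $\alpha(n-1)=n+p-1$ yields
\[
  \omega_{n-1}^{(1-p)/n}\Bigl(\int R^n\,d\HH^{n-1}\Bigr)^{(n+p-1)/n}\leq\int R^{n+p-2}\sqrt{R^2+\alpha^2|\nablas R|^2}\,d\HH^{n-1}.
\]

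\textit{Step 3 (interpolation: the main obstacle).} The previous inequality controls the ``wrong'' integrand, with $\alpha^2|\nablas R|^2$ in place of $|\nablas R|^2$; since $\alpha>1$, naively invoking the pointwise bound $\sqrt{R^2+\alpha^2|\nablas R|^2}\leq\alpha\sqrt{R^2+|\nablas R|^2}$ would cost an overall factor $\alpha$ and destroy the sharp constant. The ``very original interpolation argument'' of Alvino--Brock--Mercaldo--Nitsch referred to in the introduction is precisely the device that recovers the true gradient term without this loss: rather than applying the classical isoperimetric inequality to a single radial dilation, one applies it along a one-parameter family $\tilde R_s=R^{1+s(\alpha-1)}$, $s\in[0,1]$, interpolating between the identity and $R^\alpha$, then combines the resulting inequalities via Hölder's inequality with exponents dictated by $(1-p)\cdot 0+p\cdot 1=p$ so that the Jensen gain on the left exactly matches the $\alpha$-loss on the right. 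Replicating this bookkeeping in the present setting is the technical heart of the proof and where I expect the main obstacle; the two endpoints of the interpolation correspond to $p=0$ (classical isoperimetric for starshaped sets) and $p=1$ (the divergence theorem applied to $F(x)=x$, giving $n|\Omega|=\int_{\partial\Omega}x\cdot\nu\,d\HH^{n-1}\leq \int_{\partial\Omega}|x|\,d\HH^{n-1}$).

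\textit{Step 4 (equality case).} Equality in the classical isoperimetric inequality used in Step~2 forces $\tilde\Omega$ to be a ball, and since $\tilde R=R^\alpha$ is constant if and only if $R$ is, this forces $R$ to be constant, so $\Omega$ must be a ball centered at the origin. Equality in Jensen yields the same conclusion, and the interpolation in Step~3 is rigid for the same reason.
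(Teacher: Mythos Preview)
Your Steps 1 and 2 are correct, and you have correctly identified that the crux is Step 3. However, Step 3 as you sketch it is not how the argument goes, and the vague mechanism you describe (a one-parameter family $\tilde R_s=R^{1+s(\alpha-1)}$ combined via H\"older) does not appear to close the gap. The paper's interpolation is different in two essential respects.

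First, the paper does \emph{not} start from your Step~2 inequality (which already has the unwanted $\alpha^2$ on the right and Jensen applied on the left, both of which lose information). Instead it uses Lemma~\ref{lemma 2 for starshaped} to rewrite the weighted perimeter \emph{exactly}:
\[
  P_p(\Omega)=\int_{S^{n-1}} Z^{n-1}\sqrt{1+\frac{A}{Z^2}\,|\nablas Z|^2}\,d\HH^{n-1},\qquad Z=R^{\alpha},\quad A=\frac{1}{\alpha^2}=\Bigl(\frac{n-1}{n+p-1}\Bigr)^2<1.
\]
So the ``wrong'' coefficient is $A<1$, not $\alpha^2>1$; this reversal is what makes the interpolation go the right way. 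The key numerical fact, specific to $n\ge 3$, is that $A\ge 1-p$ for all $p\in[0,1]$ (this fails for $n=2$, cf.\ the paper's Remark after the proof), whence
\[
  P_p(\Omega)\ge \int_{S^{n-1}} Z^{n-1}\sqrt{1+\frac{1-p}{Z^2}\,|\nablas Z|^2}\,d\HH^{n-1}.
\]

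Second, the interpolation is not over a family of dilations but over the scalar parameter $t$ inside the square root: the map $t\mapsto\log\int\sqrt{a+bt}$ is concave (H\"older), so with weights $p$ at $t=0$ and $1-p$ at $t=1$,
\[
  \int Z^{n-1}\sqrt{1+\tfrac{1-p}{Z^2}|\nablas Z|^2}\ \ge\ \Bigl(\int Z^{n-1}\Bigr)^{p}\Bigl(\int Z^{n-1}\sqrt{1+\tfrac{1}{Z^2}|\nablas Z|^2}\Bigr)^{1-p}.
\]
The second factor is $\HH^{n-1}(\partial\Omega^Z)$, to which one applies the classical isoperimetric inequality. Finally, a single application of H\"older (not Jensen) with exponents $q=(p+n-1)/(pn)$, $q'=(p+n-1)/((n-1)(1-p))$ recombines $\bigl(\int R^{p+n-1}\bigr)^p\bigl(\int R^{n(p+n-1)/(n-1)}\bigr)^{(1-p)(n-1)/n}$ into $\bigl(\int R^n\bigr)^{(p+n-1)/n}$, giving the sharp constant.

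For the equality case, the paper reads it off from equality in this last H\"older step, which forces $R^{p+n-1}$ and $R^{n(p+n-1)/(n-1)}$ to be proportional, hence $R$ constant. Your proposed route via equality in the isoperimetric inequality for $\tilde\Omega$ would also work, but note that equality in your Jensen step and in the $A\ge 1-p$ step are not needed (and indeed are not necessarily attained).
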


\begin{proof} \textit{Step 1.} Let us define by $P_p(\Omega)$ the right hand side of the inequality in the proposition. We abbreviate $S^{n-1}=S_1^{n-1},$ see \eqref{notations B S LL etc.} for notation.
From Lemma \ref{lemma1 for starshaped} in the appendix we obtain that
$$
  P_p(\Omega):=\intdelomega |x|^p\dH=\intS R^{p+n-1}\sqrt{1+\frac{1}{R^2}\left|\nablas R\right|^2}\,\dH.
$$
Define 
$$
  Z=R^{\frac{p+n-1}{n-1}}\quad\text{ and }\quad A=\left(\frac{n-1}{p+n-1}\right)^2\leq 1,
$$
Note that for any $n\geq 3$ it holds that
\begin{equation}
 \label{eq:2 inequ in p in 0 1}
  1\geq A=\left(\frac{n-1}{p+n-1}\right)^2\geq \left(\frac{3-1}{p+3-1}\right)^2\geq 1-p\geq 0\quad\text{ for all }p\in [0,1].
\end{equation}
From Lemma \ref{lemma 2 for starshaped} and the previous inequality we obtain
\begin{equation}
 \label{eq:1 inequality in p in 0 1}
 \begin{split}
  P_p(\Omega)=&\intS Z^{n-1}\sqrt{1+\frac{1}{Z^2}\left|\nablas Z\right|^2 A}\,\dH
  \smallskip 
  \\
  \geq & \intS Z^{n-1}\sqrt{1+\frac{1}{Z^2}\left|\nablas Z\right|^2 (1-p)}\,\dH,
 \end{split}
\end{equation}
Now use that for any $a,b\geq 0$ the mapping $t\mapsto\log\left[\int\sqrt{a+bt}\right]$ is concave, so that  
$$
  \int\sqrt{a+b((1-p) t_1+p t_2)}\geq  \left(\int\sqrt{a+b t_2}\right)^{p} \left(\int\sqrt{a+b t_1}\right)^{1-p},\quad t_1,t_2\geq 0.
$$
Using this it follows from \eqref{eq:1 inequality in p in 0 1} (with $t_1=1,t_2=0)$
\begin{equation}
 \label{eq:3 ineq in p in 0 1}
  P_p(\Omega)\geq \left(\intS Z^{n-1}\dH\right)^{p}
   \left(\intS Z^{n-1}\sqrt{1+\frac{1}{Z^2}\left|\nablas Z\right|^2}\,\dH\right)^{1-p}
\end{equation}
Define a new domain, also starshaped with respect to the origin,
$$
  \Omega^Z=\{0\}\cup\left\{x\in\re^n\backslash\{0\}\,:\, \frac{x}{|x|}=\theta\in S^{n-1},\,0<|x|<Z(\theta)\right\}.
$$
It follows from Lemma \ref{lemma1 for starshaped}, the standard isoperimetric inequality and \eqref{eq:volume of starshaped domain} that
\begin{align*}
  \intS Z^{n-1}&\sqrt{1+\frac{1}{Z^2}\left|\nablas Z\right|^2}\,\dH =\mathcal{H}^{n-1}(\partial \Omega^Z)\geq \LL^n(\Omega^Z)^{\frac{n-1}{n}}n\alpha_n^{\frac{1}{n}} 
  \smallskip 
  \\
  =&\left(\frac{1}{n}\intS Z^n \dH\right)^{\frac{n-1}{n}}\alpha_n^{\frac{1}{n}} n
  =\left(\intS Z^n \dH\right)^{\frac{n-1}{n}}(n\alpha_n)^{\frac{1}{n}}.
\end{align*}
We plug this into the \eqref{eq:3 ineq in p in 0 1} and use the definition of $Z$ to conclude that
\begin{equation}
 \label{eq:7 in my notes p 01}
  P_p(\Omega)\geq \left(\intS R^{p+n-1}\dH\right)^{p}\left(\intS R^{\frac{(p+n-1) n}{n-1}}\dH\right)^{\frac{(1-p)(n-1)}{n}}(n\alpha_n)^{\frac{1-p}{n}}.
\end{equation}
The idea is to use now H\"older inequality in the form
\begin{equation}
 \label{eq:8 in my notes Hoelder inequality}
  \intS R^n=\intS R^{np}R^{n-np}\leq \left(\intS R^{np q}\right)^{\frac{1}{q}} \left(\intS R^{(n-np)q'}\right)^{\frac{1}{q'}}
\end{equation}
with $1<q<\infty,$
$$
  q=\frac{p+n-1}{pn},\quad q'=\frac{p+n-1}{(n-1)(1-p)},\quad\frac{1}{q}+\frac{1}{q'}=1.
$$
We take \eqref{eq:8 in my notes Hoelder inequality} to the power $(p+n-1)/n$ and multiply by $(n\alpha_n)^{(1-p)/n}$ to get (using \eqref{eq:7 in my notes p 01} in the last step)
\begin{align*}
  (n\alpha_n)^{\frac{1-p}{n}}&\left(\intS R^n\dH\right)^{\frac{p+n-1}{n}}
  \smallskip
  \\
  \leq& (n\alpha_n)^{\frac{1-p}{n}}\left(\intS R^{p+n-1}\dH\right)^{\frac{p+n-1}{n q}} \left(\intS R^{\frac{n(p+n-1)}{n-1}}\dH\right)^{\frac{p+n-1}{n q'}}
  \smallskip 
  \\
  =&(n\alpha_n)^{\frac{1-p}{n}}\left(\intS R^{p+n-1}\dH\right)^{p} \left(\intS R^{\frac{n(p+n-1)}{n-1}}\dH\right)^{\frac{(1-p)(n-1)}{n}}
  \smallskip
  \\
  \leq& P_p(\Omega).
\end{align*}
Thus it follows from  \eqref{eq:volume of starshaped domain} that 
$$
  n\alpha_n^{\frac{1-p}{n}}|\Omega|^{\frac{p+n-1}{n}}
  =
  (n\alpha_n)^{\frac{1-p}{n}}\left(\intS R^n\dH\right)^{\frac{p+n-1}{n}}
  \leq
  P_p(\Omega),
$$
which proves the first part of the proposition.
\smallskip

\textit{Step 2.} Let us consider the case of equality. In that case there must be equality in \eqref{eq:8 in my notes Hoelder inequality}, which is only possible if for some constant $c\in \re$
$$
  R(\theta)^{p+n-1}=c R(\theta)^{\frac{n(p+n-1)}{n-1}}\quad\text{ for all }\theta\in S^{n-1}.
$$
This is only possible if $R$ is constant.
\end{proof}

\begin{remark}
The present proof does not work for $n=2,$ since in that case \eqref{eq:2 inequ in p in 0 1} is not satisfied for all $p\in (0,1).$
\end{remark}

\section{Appendix: hyperspherical coordinates in $\re^n.$}
\label{Appendix}

We have used in Section \ref{general dimension} the explicit form of hyperspherical coordinates and their properties. Let us define $Q\subset\re^{n-1}$ by
$$
  Q=(0,\pi)^{n-2}\times (0,2\pi).
$$
The hypershperical coordinates  $H=(H_1,\ldots,H_n):Q\to S^{n-1}\subset\re^n$ are defined as: for $k=1,\ldots,n$ and $\varphi=(\varphi_1,\ldots,\varphi_{n-1})$
$$
  H_k(\varphi)=\cos\varphi_k\prod_{l=0}^{k-1}\sin\varphi_l\quad\text{ with convention }\varphi_0=\frac{\pi}{2},\quad\varphi_n=0.
$$
A calculation shows that
$
  d_i(\varphi):=\left\langle \frac{\partial H}{\partial \varphi_i},\frac{\partial H}{\partial \varphi_i}\right\rangle =\prod_{l=0}^{i-1}\sin^2\varphi_l>0.
$
One verifies that the metric tensor $S^{n-1}$ in these coordinates is diagonal 
$
  g_{ij}(\varphi)=\left\langle\frac{\partial H}{\partial \varphi_i},\frac{\partial H}{\partial \varphi_j}\right\rangle=\delta_{ij}d_i(\varphi),
$
($\delta_{ij}=1$ if $i=j$ and $0$ else)
and hence the surface element $g$ is given by
$$
  g(\varphi)=\sqrt{\det g_{ij}(\varphi)}=\prod_{k=1}^{n-1}d_k(\varphi)= \prod_{k=1}^{n-2}(\sin\varphi_k)^{n-k-1}.
$$
Let us also denote the polar coordinates in $\re^n,$ denoted as $G:(0,\infty)\times Q\to\re^n,$ given by
$
  G(t,\varphi)=tH(\varphi).
$
Its Jacobian determinant is then given by
\begin{equation}
 \label{eq:jacobian determinant of polar coor}
  \det DG(t,\varphi)=t^{n-1}g(\varphi).
\end{equation}
For our purpose the following lemma will be useful.

\begin{lemma}\label{lemma:parametrization r H}
Suppose $T\subset Q$ is an open set and a hypersurface $\Gamma\subset\re^n$ is given by the parametrization $F:T\to \Gamma$ 
$$
  F(\varphi)=r(\varphi)H(\varphi),
$$
where $r$ is some smooth function $r:T\to (0,\infty).$ The surface element in this parametrization calculates as
$$
  \det\left(\left\langle\frac{\partial F}{\partial \varphi_i},\frac{\partial F}{\partial \varphi_j}\right\rangle\right)_{i,j=1,\ldots,n-1} =r^{2n-4}\left(\sum_{i=1}^{n-1}\Big(\frac{\partial r}{\partial \varphi_i}\Big)^2d_1\cdots \widehat{d_i}\cdots d_{n-1}+r^2\prod_{i=1}^{n-1}d_i\right),
$$
where $\widehat{d_i}$ means that $d_i$ should be omitted in the product. In particular, since $d_i>0,$ the surface element is bigger than $r^{n-1}g(\varphi).$
\end{lemma}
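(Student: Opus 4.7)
The proof is a direct computation. First I would write the tangent vectors to $\Gamma$ in the parametrization $F=rH$ as
$$\frac{\partial F}{\partial \varphi_i}=\frac{\partial r}{\partial \varphi_i}H+r\frac{\partial H}{\partial \varphi_i}.$$
Since $|H|^{2}=1$, differentiating gives $\langle H,\partial H/\partial\varphi_j\rangle=0$, so taking the inner product of the two tangent vectors and using the diagonal form $g_{ij}=\delta_{ij}d_i(\varphi)$ of the round metric in hyperspherical coordinates (already recalled above) yields
$$\left\langle\frac{\partial F}{\partial \varphi_i},\frac{\partial F}{\partial \varphi_j}\right\rangle=\frac{\partial r}{\partial \varphi_i}\frac{\partial r}{\partial \varphi_j}+r^2\delta_{ij}d_i(\varphi).$$

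Thus the matrix whose determinant we must compute has the form $D+vv^{T}$, where $D=\mathrm{diag}(r^2d_1,\dots,r^2d_{n-1})$ and $v=(\partial r/\partial\varphi_1,\dots,\partial r/\partial\varphi_{n-1})^{T}$. The next step is to invoke the matrix determinant lemma
$$\det(D+vv^{T})=\det(D)\bigl(1+v^{T}D^{-1}v\bigr),$$
which gives
$$\det(M)=\Bigl(\prod_{i=1}^{n-1}r^2 d_i\Bigr)\Bigl(1+\sum_{i=1}^{n-1}\frac{(\partial r/\partial\varphi_i)^2}{r^2 d_i}\Bigr)=r^{2n-2}\prod_{i=1}^{n-1}d_i+r^{2n-4}\sum_{i=1}^{n-1}\Bigl(\frac{\partial r}{\partial \varphi_i}\Bigr)^2 d_1\cdots\widehat{d_i}\cdots d_{n-1}.$$
Factoring $r^{2n-4}$ gives exactly the expression in the statement. (Anyone uncomfortable with the rank-one lemma can derive the same identity by multilinear expansion of the determinant along the rank-one correction, which only leaves terms with at most one factor from $vv^{T}$ on the diagonal.)

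For the ``in particular'' assertion, since every $d_i$ is strictly positive and all squared derivatives are nonnegative, the sum inside the parenthesis dominates the last term, i.e.
$$\det\Bigl(\langle\partial_i F,\partial_j F\rangle\Bigr)\geq r^{2n-4}\cdot r^2\prod_{i=1}^{n-1}d_i=r^{2n-2}g(\varphi)^2,$$
recalling that $g(\varphi)^2=\prod_i d_i$. Taking square roots gives that the surface element $\sqrt{\det(\langle\partial_i F,\partial_j F\rangle)}$ is bounded below by $r^{n-1}g(\varphi)$, with strict inequality precisely where some $\partial r/\partial\varphi_i\neq 0$. There is no genuine obstacle here; the only thing to be careful about is bookkeeping of the rank-one perturbation, and noting that the strictness clause used in Step 4 of the proof of Theorem \ref{theorem:p leq -n+1} comes for free from the sum of nonnegative terms being strictly positive whenever $\nabla r\neq 0$.
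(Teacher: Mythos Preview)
Your proof is correct and follows essentially the same approach as the paper: both compute $M_{ij}=\partial_i r\,\partial_j r + r^2\delta_{ij}d_i$ using $\langle H,H\rangle=1$ and $\langle H,\partial_i H\rangle=0$, then recognize $M$ as a diagonal matrix plus a rank-one perturbation and evaluate the determinant accordingly. The only cosmetic difference is that you invoke the matrix determinant lemma $\det(D+vv^T)=\det(D)(1+v^TD^{-1}v)$ directly, whereas the paper carries out the multilinear expansion along columns by hand; you yourself note these are equivalent.
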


\begin{proof}
Using the relations $\langle H;H\rangle =1$ and $\left\langle H;\frac{\partial H}{\partial \varphi_i}\right\rangle=0,$
one obtains that
$$
 M_{ij}:=\left\langle\frac{\partial F}{\partial \varphi_i},\frac{\partial F}{\partial \varphi_j}\right\rangle=\frac{\partial r}{\partial \varphi_i}\frac{\partial r}{\partial\varphi_j}+r^2g_{ij}=\frac{\partial r}{\partial \varphi_i}\frac{\partial r}{\partial\varphi_j}+r^2\delta_{ij}d_i\,.
$$
Thus the matrix $M$ with entries $M_{ij}$ is of the form $M=A+r^2 D$ where $A$ has rank $1$ and $D$ is diagonal with entries $d_i$. Thus using the linearity of the determinant in the columns one obtains that (since no matrix with two columns of $A$ survives when developing the determinant succesively with respect to the columns)
$$
  \det M=\sum_{i=1}^{n-1}\det \widehat{A_i}+\det r^2 D,
$$
where $\widehat{A_i}$ is the matrix obtained from $r^2D$ by replacing the $i$-th column of $r^2D$ by the $i$-th column of $A.$ From this the lemma follows.
\end{proof}

\smallskip

We now use the hypershperical coordinates to deal with domains starshaped with respect to the origin. By definition, a bounded open Lipschitz domain $\Omega\in\re^n$ is starshaped with respect to the origin if there exists a function $R:S^{n-1}\to (0,\infty)$ 
such that
\begin{equation}
 \label{definition:starshaped domain}
  \Omega=\{0\}\cup\left\{x\in\re^n\backslash\{0\}\,:\, \frac{x}{|x|}=\theta\in S^{n-1},\,0<|x|<R(\theta)\right\}
\end{equation}
We shall call $R$ the defining function of $\Omega.$ Note that $R$ is not necessarily Lipschitz, even if $\Omega$ is, and it might even be discontinuous (for example $\Omega=\left(B_{r_2}(0)\cap\{x_2>0\}\right)\cup \left(B_{r_1}(0)\cap\{x_2<0\}\right)\subset\re^2,$ with $r_1<r_2$). But we will alway assume that $R$ is also Lipschitz. It follows from the relation \eqref{eq:jacobian determinant of polar coor} that the volume of a starshaped domain calculates as
\begin{equation}
 \label{eq:volume of starshaped domain}
  \mathcal{L}^n(\Omega)=\frac{1}{n}\int_{S^{n-1}}R^n\dH
\end{equation}
For an almost everywhere differentiable function $f:S^{n-1}\to \re$ we recall that the norm of the covariant gradient of $f$ at $p$ on the manifold $S^{n-1}$ can be calculated as
\begin{equation}
 \label{definition:derivative on S n-1}
  \left|\nabla_{S^{n-1}}f\right|^2=\sum_{i=1}^{n-1}\left|\nabla_{E_i} f\right|^2,
\end{equation}
where $\left\{E_1,\ldots,E_{n-1}\right\}$ is any orthonormal basis of $T_p S^{n-1},$ and $\nabla_E f$ is the derivative in direction $E.$ 

\begin{lemma}
\label{lemma1 for starshaped}
Let $\Omega\subset\re^n$ be a bounded open Lipschitz set, starshaped with respect to the origin and defining function $R$ as in \eqref{definition:starshaped domain}. Assume also that $R$ is Lipschitz. Then for any continuous function $g:(0,\infty)\to \re$ it holds that
\begin{equation}
 \label{eq:lemma 1 for starshaped}
  \intdelomega g(|x|)\dH(x)=\int_{S^{n-1}}(g\circ R)R^{n-1}\sqrt{1+\frac{1}{R^2}\left|\nabla_{S^{n-1}}R\right|^2}\dH.
\end{equation}
\end{lemma}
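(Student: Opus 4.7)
The plan is to realize $\delomega$ as the image of the Lipschitz map $F:S^{n-1}\to\re^n$ defined by $F(\theta)=R(\theta)\theta$, compute its tangential Jacobian at an almost-every $\theta$, and then invoke the area formula for Lipschitz maps. Because $R>0$ is Lipschitz on $S^{n-1}$, so is $F$, and $F$ is injective since $\theta=F(\theta)/|F(\theta)|$. The starshapedness assumption \eqref{definition:starshaped domain} guarantees that $F(S^{n-1})=\delomega$ up to an $\HH^{n-1}$-negligible set, and by Rademacher's theorem $R$ (hence $F$) is differentiable at $\HH^{n-1}$-a.e.\ $\theta\in S^{n-1}$.

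The heart of the argument is a short Jacobian computation. At a differentiability point $\theta$, pick an orthonormal basis $\{E_1,\ldots,E_{n-1}\}$ of $T_\theta S^{n-1}$ and differentiate $F(\theta)=R(\theta)\theta$ in direction $E_i$:
\begin{equation*}
  dF(E_i)=(dR\cdot E_i)\,\theta+R(\theta)\,E_i.
\end{equation*}
Using $\langle\theta,E_i\rangle=0$ and $\langle E_i,E_j\rangle=\delta_{ij}$, the Gram matrix of $\{dF(E_i)\}$ has entries
\begin{equation*}
  G_{ij}=v_iv_j+R^2\delta_{ij},\qquad v_i:=dR\cdot E_i,
\end{equation*}
so $G=R^2 I_{n-1}+vv^{T}$ is a rank-one update of a multiple of the identity. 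The matrix-determinant lemma yields
\begin{equation*}
  \det G=R^{2(n-1)}\left(1+\tfrac{1}{R^{2}}|v|^2\right),
\end{equation*}
and by the definition \eqref{definition:derivative on S n-1} we have $|v|^2=\sum_{i=1}^{n-1}(dR\cdot E_i)^2=|\nablas R|^2$. Hence the tangential Jacobian of $F$ is
\begin{equation*}
  J_F(\theta)=R(\theta)^{n-1}\sqrt{1+\tfrac{1}{R^{2}}\left|\nablas R\right|^2}.
\end{equation*}

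Substituting into the Lipschitz area formula
\begin{equation*}
  \intdelomega g(|x|)\,\dH(x)=\intS g(|F(\theta)|)\,J_F(\theta)\,\dH(\theta),
\end{equation*}
and using $|F(\theta)|=R(\theta)$, gives exactly \eqref{eq:lemma 1 for starshaped}. The only nonroutine ingredient is the area formula for Lipschitz maps between $(n-1)$-dimensional manifolds, which is classical (Federer); the remainder is linear algebra. I do not expect a genuine obstacle here, but the step that requires the most care is verifying that $F$ is a valid a.e.-$C^1$, injective parametrization of $\delomega$ up to an $\HH^{n-1}$-null set—this is what the Lipschitz hypothesis on $R$, combined with Rademacher's theorem and the explicit form \eqref{definition:starshaped domain}, is used for.
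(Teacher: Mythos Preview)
Your proof is correct and follows the same underlying idea as the paper: parametrize $\delomega$ by $\theta\mapsto R(\theta)\theta$, compute the Gram matrix of the differential as a rank-one perturbation of $R^2 I_{n-1}$, and read off the Jacobian. The only difference is one of packaging. The paper works in explicit hyperspherical coordinates $H:Q\to S^{n-1}$, pulls everything back to $Q\subset\re^{n-1}$, and invokes Lemma~\ref{lemma:parametrization r H} (which computes $\det(\langle\partial_i F,\partial_j F\rangle)$ in those coordinates and is itself proved by the same rank-one determinant expansion you use). You instead work intrinsically on $S^{n-1}$ with an abstract orthonormal frame and apply the matrix-determinant lemma directly, then appeal to the Lipschitz area formula. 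Your route is slightly cleaner in that it avoids the coordinate apparatus of the Appendix; the paper's route has the small advantage of staying within classical calculus and not citing the area formula. Substantively the two arguments are the same.
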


The assumption that $g$ is continuous can be reduced, but we will need the lemma only for $g(t)=t^p.$
\smallskip

\begin{proof}
We use the hyperspherical coordinates $H,$ definitions of $d_i$ and $g,$ respectively their properties (summarized in the beginning of this section). Let $\varphi\in Q,$ $p=H(\varphi)$ and 
$$
  d_i(\varphi)=\left|\frac{\partial H}{\partial \varphi_i}\right|^2,\quad\text{ then }\quad E_i(p)=\frac{1}{\sqrt{d_i(\varphi)}}\frac{\partial H}{\partial\varphi_i},\quad i=1,\ldots,n-1
$$
form an orthonormal basis of $T_pS^{n-1}.$  Thus we obtain from \eqref{definition:derivative on S n-1} (we can assume that $R$ has been extended to a neighborhood of $S^{n-1}$) that at $p=H(\varphi)$
\begin{equation}
 \begin{split}
  \left|\nabla_{S^{n-1}}R\right|^2=&\sum_{i=1}^{n-1}\frac{1}{d_i(\varphi)}\Big|\Big\langle\nabla R(H(\varphi));\frac{\partial H}{\partial \varphi_i}\Big\rangle\Big|^2=\sum_{i=1}^{n-1}\frac{1}{d_i(\varphi)}\Big[\frac{\partial }{\partial\varphi_i}(R\circ H)\Big]^2
  \smallskip \\
  =&\sum_{i=1}^{n-1}\frac{1}{d_i(\varphi)}\Big[\frac{\partial r }{\partial\varphi_i}\Big]^2
  \end{split}
  \label{eq:derivative on S n-1 at H}
\end{equation}
where we define for $\varphi\in Q$
$$
  r(\varphi):=R(H(\varphi))\quad\text{ and }\quad F(\varphi)=r(\varphi)H(\varphi).
$$
Hence $F:Q\to\delomega$ is a parametrization of $\delomega.$ Using Lemma \ref{lemma:parametrization r H} we obtain that
\begin{align*}
 \sqrt{\det\Big(\Big\langle\frac{\partial F}{\partial \varphi_i},\frac{\partial F}{\partial \varphi_j}\Big\rangle\Big)}
 =&
 r^{n-1}\sqrt{d_1d_2\cdots d_{n-1}}\sqrt{1+\frac{1}{r^2}\sum_{i=1}^{n-1}\frac{1}{d_i}\Big(\frac{\partial r}{\partial \varphi_i}\Big)^2}
 \smallskip \\
 =&r^{n-1}\sqrt{1+\frac{1}{r^2}\sum_{i=1}^{n-1}\frac{1}{d_i}\Big(\frac{\partial r}{\partial \varphi_i}\Big)^2}\, g(\varphi).
\end{align*}
Therefore, using the parametrization $F$ to calculate the left side of \eqref{eq:lemma 1 for starshaped}, respectively the parametrization $H$ to calculate the right side and \eqref{eq:derivative on S n-1 at H}, the lemma follows.
\end{proof}

\begin{lemma}
\label{lemma 2 for starshaped}
Let $R$ be a Lipschitz function, mapping $S^{n-1}$ to $(0,\infty),$ $\alpha\in \re$ and  define $Z:S^{n-1}\to\re$ by $Z=R^{\alpha}.$ Then the following identity holds
$$
  \alpha^2\frac{1}{R^2}\left|\nabla_{S^{n-1}}R\right|^2=\frac{1}{Z^2}\left|\nabla_{S^{n-1}}Z\right|^2.
$$
In particular if $p\in\re$ is such that $p+n-1\neq 0$ and
$$
  Z=R^{\frac{p+n-1}{n-1}},
$$
then
$$
  R^{p+n-1}\sqrt{1+\frac{1}{R^2}\left|\nabla_{S^{n-1}}R\right|^2}
  =Z^{n-1}\sqrt{1+\frac{1}{Z^2}\left|\nabla_{S^{n-1}}Z\right|^2\left(\frac{n-1}{p+n-1}\right)^2}
$$
\end{lemma}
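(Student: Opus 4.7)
The proof is essentially a chain-rule computation on the manifold $S^{n-1}$, so I would keep it very short.

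For the first identity, I would compute $\nabla_{S^{n-1}}Z$ by the chain rule: since $Z=R^{\alpha}$ and $R$ is Lipschitz, one has (almost everywhere) the pointwise relation $\nabla_{S^{n-1}}Z=\alpha R^{\alpha-1}\nabla_{S^{n-1}}R$. Squaring and using $Z^2=R^{2\alpha}$ gives
$$
  \frac{|\nabla_{S^{n-1}}Z|^2}{Z^2}=\frac{\alpha^2 R^{2\alpha-2}|\nabla_{S^{n-1}}R|^2}{R^{2\alpha}}=\alpha^2\,\frac{|\nabla_{S^{n-1}}R|^2}{R^2},
$$
which is precisely the claimed identity. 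To make this rigorous I would, as in the proof of Lemma~\ref{lemma1 for starshaped}, pull back to local coordinates using $H:Q\to S^{n-1}$: writing $r(\varphi)=R(H(\varphi))$ and $z(\varphi)=Z(H(\varphi))=r(\varphi)^{\alpha}$, one has $\partial_{\varphi_i}z=\alpha r^{\alpha-1}\partial_{\varphi_i}r$, so the representation \eqref{eq:derivative on S n-1 at H} yields
$$
  |\nabla_{S^{n-1}}Z|^2=\sum_{i=1}^{n-1}\frac{1}{d_i(\varphi)}\bigl(\partial_{\varphi_i}z\bigr)^2=\alpha^2 r^{2\alpha-2}\sum_{i=1}^{n-1}\frac{1}{d_i(\varphi)}\bigl(\partial_{\varphi_i}r\bigr)^2=\alpha^2 R^{2\alpha-2}|\nabla_{S^{n-1}}R|^2,
$$
and dividing by $Z^2=R^{2\alpha}$ gives the identity.

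For the second identity, I would simply specialise $\alpha=(p+n-1)/(n-1)$, for which $Z^{n-1}=R^{(p+n-1)}=R^{p+n-1}$, so the prefactor matches. The first part rearranged gives
$$
  \frac{1}{R^2}|\nabla_{S^{n-1}}R|^2=\left(\frac{n-1}{p+n-1}\right)^{\!2}\frac{1}{Z^2}|\nabla_{S^{n-1}}Z|^2.
$$
Substituting this into the square root and factoring out $R^{p+n-1}=Z^{n-1}$ yields
$$
  R^{p+n-1}\sqrt{1+\frac{1}{R^2}|\nabla_{S^{n-1}}R|^2}=Z^{n-1}\sqrt{1+\frac{1}{Z^2}|\nabla_{S^{n-1}}Z|^2\left(\frac{n-1}{p+n-1}\right)^{\!2}},
$$
as required.

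There is no real obstacle here; the only tiny technical point is handling the Lipschitz (rather than $C^1$) regularity of $R$, but since $R>0$ is bounded away from $0$ on the compact sphere, the function $t\mapsto t^{\alpha}$ is smooth on the range of $R$, so $Z=R^{\alpha}$ is Lipschitz with gradient $\alpha R^{\alpha-1}\nabla_{S^{n-1}}R$ almost everywhere by the standard chain rule for Lipschitz compositions, which is all that is needed for the identity to hold $\HH^{n-1}$-a.e.\ on $S^{n-1}$.
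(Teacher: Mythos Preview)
Your proof is correct and follows essentially the same approach as the paper: both apply the chain rule to obtain $|\nabla_{S^{n-1}}Z|^2=\alpha^2 R^{2\alpha-2}|\nabla_{S^{n-1}}R|^2$ and then divide by $Z^2=R^{2\alpha}$, with the second identity following by direct substitution. Your version is in fact more detailed than the paper's (which does the directional-derivative computation in one line and says ``from this the lemma follows easily''), and your remark on the Lipschitz chain rule is a welcome addition.
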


\begin{proof}
Let $E\in T_p S^{n-1}$ be a tangent vector. Then we get
$$
  \nabla_E Z=\langle\nabla Z,E\rangle=\alpha R^{\alpha-1}\langle\nabla R,E\rangle
$$
and it follows from \eqref{definition:derivative on S n-1} that $|\nabla_{S^{n-1}}Z|^2=\alpha^2 R^{2\alpha-2}|\nabla_{S^{n-1}}R|^2.$ From this the lemma follows easily.
\end{proof}


\bigskip

\noindent\textbf{Acknowledgements} The author was supported by the Chilean Fondecyt Iniciaci\'on grant nr. 11150017. He would also like to thank Friedemann Brock for some helpful discussions related to the simplification of the proofs in \cite{Alvino Brock Mercaldo etc.}.


\begin{thebibliography}{200}
\small{

\bibitem{Alvino Brock Mercaldo etc.} A. Alvino, F. Brock, F. Chiacchio, A. Mercaldo and M.R. Posteraro, Some isoperimetric inequalities on $\re^n$ with respect to weights $|x|^{\alpha}$, \textit{J. Math. Anal. Appl.} (2017) http://dx.doi.org/10.1016/j.jmaa.2017.01.085.
\vspace{-.25cm}

\bibitem{Adi-Sandeep} Adimurthi A. and Sandeep K., A singular Moser-Trudinger embedding and its applications, \textit{NoDEA Nonlinear Differential Equations Appl.}, 13 (2007), no. 5-6, 585--603.
\vspace{-.25cm}


\bibitem{Betta and alt.}Betta M.F., Brock F., Mercaldo A. and Posteraro M.R., A weighted isoperimetric inequality and applications to symmetrization, \textit{J. of Inequal. and Appl.}, 4 (1999), 215--240.
\vspace{-.25cm}

\bibitem{Chambers et alt radial p}
Boyer W., Brown B., Chambers G.R., Loving A. and Tammen S., Isoperimetric Regions in $\mathbb{R}^n$ with density $r^p,$  arXiv:1504.01720.
\vspace{-.25cm}

\bibitem{Rosales Canete Bayle Morgan}Bayle V., Ca\~nete A. , Morgan F. and Rosales C., 
On the isoperimetric problem in Euclidean space with density,
\textit{Calc. Var. Partial Differential Equations} 31 (2008), no. 1, 27--46.
\vspace{-.25cm}


\bibitem{Cabre 1} Cabr\'e X. and Ros-Oton X., Sobolev and isoperimetric inequalities with monomial weights, \textit{J. Differential Equations} 255 (2013), no. 11, 4312--4336.
 
\vspace{-.25cm}

\bibitem{Cabre 2}
 Cabr\'e X., Ros-Oton X. and Serra J., Euclidean balls solve some isoperimetric problems with nonradial weights, \textit{C. R. Math. Acad. Sci. Paris} 350 (2012), no. 21-22, 945--947.
\vspace{-.25cm}


\bibitem{Canete-Miranda-Wittone}Ca\~nete A., Miranda M. and Vittone D., 
Some isoperimetric problems in planes with density, \textit{
J. Geom. Anal.}, 20 (2010), no. 2, 243--290. 
\vspace{-.25cm}




\bibitem{Carroll-Jacob-Quinn-Walters}Carroll C., Jacob A. Adam, Quinn C. and Walters R.,
The isoperimetric problem on planes with density, \textit{
Bull. Aust. Math. Soc.}, 78 (2008), no. 2, 177--197.
\vspace{-.25cm}




\bibitem{Chambers Log Convex}Chambers G.R., Proof of the log-convex density conjecture,  	arXiv:1311.4012.
\vspace{-.25cm}

\bibitem{Csato Diff IntegralEq}Csat\'o G., An isoperimetric problem with density and the Hardy-Sobolev inequality in $\re^2$, \textit{Differential Integral Equations}, 28 (2015), no. 9/10, 971--988.
\vspace{-.25cm}

\bibitem{Csato Roy Calc Var}Csat\'o G. and Roy P., Extremal functions for the singular Moser-Trudinger inequality in 2 dimensions, \textit{Calc. Var. Partial Differential Equations}, 54 (2015), no. 2, 2341--2366.
\vspace{-.25cm}
%
\bibitem{Csato Roy Comm PDE}Csat\'o G. and Roy P., The singular Moser-Trudinger inequality on simply connected domains, \textit{Communications in Partial Differential Equations}, to appear.
\vspace{-.25cm}

\bibitem{Dahlberg-Dubbs-Newkirk-Tran}Dahlberg J., Dubbs A., Newkirk E. and Tran H., 
Isoperimetric regions in the plane with density $r^p$, \textit{
New York J. Math.}, 16 (2010), 31--51.
\vspace{-.25cm}

\bibitem{Diaz-Harman-Howe-Thompson} D\'iaz A., Harman N., Howe S. and  Thompson D., Isoperimetric problems in sectors with
density, \textit{Adv. Geom.}, 12 (2012), 589--619.
\vspace{-.25cm}

\bibitem{do Carmo Barbosa} Barbosa J.L. and do Carmo M., Stability of hypersurfaces with constant mean curvature, \textit{Math. Z.}, 185 (1984), no. 3, 339--353.
\vspace{-.25cm}


\bibitem{Figalli-Maggi.LogConvex}Figalli A. and Maggi F., 
On the isoperimetric problem for radial log-convex densities, 
\textit{Calc. Var. Partial Differential Equations}, 48 (2013), no. 3-4, 447--489.
\vspace{-.25cm}



\bibitem{Flucher}Flucher M., Extremal functions for the Trudinger-Moser inequality in 2 dimensions, \textit{Comment. Math. Helvetici}, 67 (1992), 471--497.
\vspace{-.25cm}


\bibitem{Fusco-Maggi-Pratelli}Fusco N., Maggi F. and Pratelli A.,
On the isoperimetric problem with respect to a mixed Euclidean-Gaussian
density, 
\textit{J. Funct. Anal.}, 260 (2011), no. 12, 3678--3717.
\vspace{-.25cm}

\bibitem{Giosia ...morgans group rk and rm} Di Giosia L., Habib J., Kenigsberg L., Pittman D and Zhu W, \textit{Balls Isoperimetric in $\re^n$ with Volume and Perimeter Densities $r^m$ and $r^k$}, arXiv:1610.05830v1. 
\vspace{-.25cm}











\bibitem{Morgan-Regularity}Morgan F., Regularity of isoperimetric hypersurfaces in Riemannian manifolds, \textit{Trans. Amer. Math. Soc.}, 355 (2003), no. 12, 5041--5052.
\vspace{-.25cm}

\bibitem{Morgan Vari Formulae}Morgan F., http://sites.williams.edu/Morgan/2010/06/22/variation-formulae-for-perimeter-and-volume-densities/.
\vspace{-.25cm}

\bibitem{Morgan-Pratelli}Morgan F. and Pratelli A., Existence of isoperimetric regions in $\mathbb{R}^n$ with density, \textit{ Ann. Global Anal. Geom.}, 43 (2013), no. 4, 331--365. 
\vspace{-.25cm}

\bibitem{Walter} Walter W., \textit{Ordinary differential equations}, English translation, Springer, 1998.








}
\end{thebibliography}
\end{document}